\documentclass[preprint,12pt]{elsarticle}
\usepackage{cmap}
\usepackage[T1]{fontenc}
\usepackage[utf8]{inputenc}
\usepackage{amsfonts, amsmath, amscd}
\usepackage{amssymb}
\usepackage{amsthm}
\usepackage[all,cmtip]{xy}
\usepackage{comment}
\usepackage{mathrsfs}
\usepackage{xcolor}
\usepackage{enumitem}
\usepackage[pdftex,unicode]{hyperref}

\newtheorem{proposition}{Proposition}
\newtheorem{theorem}{Theorem}
\newtheorem*{theorem*}{Theorem}
\newtheorem{lemma}{Lemma}
\newtheorem*{lemma*}{Lemma}
\newtheorem{cor}{Corollary}

\theoremstyle{definition}
\newtheorem{example}{Example}

\newtheorem{question}{Question}
\newtheorem{problem}{Problem}

\theoremstyle{remark}
\newtheorem{note}{Remark}
\newtheorem*{note*}{Remark}

\def\N{\mathbb N}

\def\R{\mathbb R}

\def\om{\omega}

\def\supp{\operatorname{supp}}

\def\dim{\operatorname{dim}}
\def\al{\alpha}

\def\la{\lambda}

\def\de{\delta}

\def\si{\sigma}
\def\ep{\varepsilon}
\def\te{\theta}

\def\cP{\mathcal{P}}

\def\cW{\mathcal{W}}

\def\cL{\mathcal{L}}
\def\cL{\mathcal{L}}
\def\cS{\mathcal{S}}
\def\cW{\mathcal{W}}
\def\cZ{\mathcal{Z}}

\def\fZ{\mathfrak Z}

\def\dd{\mathscr{D}}
\def\sA{\mathscr{A}}
\def\sB{\mathscr{B}}

\def\rarr{\Rightarrow}
\def\larr{\Leftarrow}
\def\lrarr{\Leftrightarrow}

\def\cl#1{\overline{#1}}

\def\env#1{\langle #1\rangle}

\def\zz#1{#1^{-1}(0)}
\def\cc#1{\bar{#1}}

\begin{document}

\begin{frontmatter}

\title{When are metrizable linear subspaces of topological vector spaces finite-dimensional?}

\author[msu1]{V.\,V.~Maksimov}
\ead{vladislav.maksimov@math.msu.ru}
\author[msu2,smbu]{E.\,A.~Reznichenko\corref{cor1}}
\ead{erezn@inbox.ru}
\cortext[cor1]{Corresponding author}

\affiliation[msu1]{organization={Department of General Problems of Control, Faculty of Mechanics and Mathematics, M.\,V.~Lomonosov Moscow State University},
            addressline={Leninskie Gory 1},
            city={Moscow},
            postcode={119991},
            country={Russia}}
\affiliation[msu2]{organization={Department of General Topology and Geometry, Faculty of Mechanics and Mathematics, M.\,V.~Lomonosov Moscow State University},
            addressline={Leninskie Gory 1},
            city={Moscow},
            postcode={119991},
            country={Russia}}
\affiliation[smbu]{organization={Shenzhen MSU-BIT University},
            city={Shenzhen},
            country={China}}

\begin{abstract}
We study conditions that imply finite-dimensionality of metrizable or Fr\'echet--Urysohn linear subspaces of topological vector spaces (TVS). We consider the classes $\dd_M$ and $\dd_{FU}$ of TVS: a space $L$ belongs to $\dd_M$ (respectively, $\dd_{FU}$) if every metrizable (respectively, Fr\'echet--Urysohn) linear subspace of $L$ is finite-dimensional.
Although the classes of metrizable spaces and of Fr\'echet--Urysohn spaces differ substantially, the classes $\dd_M$ and $\dd_{FU}$ are much closer to each other: in the Hru\v{s}\'ak--Ramos-Garc\'{\i}a model, in which Malykhin's problem is solved, $\dd_M=\dd_{FU}$, whereas under $\mathrm{MA+\lnot CH}$, as shown in the paper, $\dd_M\neq\dd_{FU}$.
For a locally convex space (LCS) $L$ we denote by $L_w$ the space $L$ with its weak topology and by $L'_{w^\ast}$ the dual space with the weak${}^\ast$ topology. This paper investigates which LCS $L$ have the property that $L_w$ and $L'_{w^\ast}$ belong to $\dd_M$ or $\dd_{FU}$.
Although the classes $\dd_M$ and $\dd_{FU}$ are close, different methods are used to study them: topological-algebraic ones for $\dd_M$ (a criterion is obtained) and methods of measure theory and probability for $\dd_{FU}$ (a sufficient condition is obtained).
We prove that if an LCS $L$ admits a continuous norm, then $L_w\in\dd_{FU}$; in particular, if $L$ admits a continuous norm and $L_w$ is Fr\'echet--Urysohn, then $L$ is finite-dimensional. If the topology of an LCS $L$ is coarser than the topology induced by some complete norm on $L$ (for example, if $L$ is Banach), then $L'_{w^\ast}\in\dd_{FU}$; completeness of the norm is essential here.
The methods of the paper also apply to the spaces of real-analytic functions and to the space of polynomials with the topology of pointwise convergence: these also belong to $\dd_{FU}$.
\end{abstract}

\begin{keyword}
topological vector space \sep metrizable space \sep Fr\'echet--Urysohn space \sep weak topology \sep weak$^*$ topology \sep finite-dimensional subspace
\MSC[2020] 46A03 \sep 54D55 \sep 46A50 \sep 03E35
\end{keyword}

\end{frontmatter}

\section{Introduction}\label{sec:intro}

One of the directions of research in functional analysis is to find conditions on a topological vector space (TVS) that force it to be finite-dimensional.
Fundamental results of this kind are Riesz's theorem that a locally compact TVS is finite-dimensional \cite{Riesz1918}, and its measure-theoretic counterpart: a TVS carrying a nonzero translation-invariant Radon measure (Haar measure) is finite-dimensional (this follows from Weil's theorem \cite{Weil1940} and Riesz's theorem, see \cite[Theorem 5.3]{Yamasaki1985}).

This is closely related to the following general problem.

\begin{problem}
Find classes $\sA$ and $\sB$ of TVS such that every subspace $S\subset L$ belonging to $\sB$ is finite-dimensional whenever $L\in\sA$.
\end{problem}

In the Schwartz space every normable subspace is finite-dimensional.\footnote{This follows from the fact that the Schwartz space is a Montel space: the unit ball of a normable subspace $N$ is bounded in $N$, hence bounded in the Schwartz space, hence relatively compact there and, in particular, totally bounded in $N$; but a normed space with a totally bounded unit ball is finite-dimensional (Riesz's lemma).} It is shown in \cite[Theorems~7.9, 7.10]{LRS-TVS} that in the free locally convex space $L(X)$ over a Tychonoff space $X$, in its weak counterpart $L_p(X)$ (the space $L(X)$ with its weak topology), and in the free TVS $V(X)$, every $\varkappa$-Fr\'echet--Urysohn or Baire subspace is finite-dimensional. Related results on $\varkappa$-Fr\'echet--Urysohn and Baire subgroups of free topological groups are obtained in \cite{LRS-groups}.

Given a class $\sB$ of TVS, let $\dd_\sB$ denote the largest class $\sA$ for which the conclusion of Problem~1 holds. Thus, $L\in\dd_\sB$ if and only if every subspace of $L$ belonging to $\sB$ is finite-dimensional. In this paper we study two such classes: $\dd_M$ corresponds to the class $\sB$ of metrizable TVS, and $\dd_{FU}$ to the class $\sB$ of Fr\'echet--Urysohn TVS:
\begin{enumerate}
\item[$\dd_M$:] $L\in \dd_M$ if every metrizable subspace of $L$ is finite-dimensional;
\item[$\dd_{FU}$:] $L\in \dd_{FU}$ if every Fr\'echet--Urysohn subspace of $L$ is finite-dimensional.
\end{enumerate}
Clearly, $\dd_{FU}\subset \dd_{M}$.

Malykhin's problem was solved in \cite{HR2014}: a model of set theory (which we call the Hru\v{s}\'ak--Ramos-Garc\'{\i}a model) was constructed in which every separable Fr\'echet--Urysohn topological group is metrizable. Since every infinite-dimensional TVS contains a countable-dimensional infinite-dimensional subspace (which is then separable), we have $\dd_M=\dd_{FU}$ in the Hru\v{s}\'ak--Ramos-Garc\'{\i}a model. On the other hand, under Martin's Axiom and the negation of the Continuum Hypothesis ($\mathrm{MA+\lnot CH}$) we construct (see Example~\ref{ex:pol:FU}) a countable-dimensional nonmetrizable Fr\'echet--Urysohn space $\cP_X$ (the space of polynomial functions on a set $X\subset\R$ with $|X|=\om_1$, see Section~\ref{sec:pol}) which belongs to $\dd_M$; it is infinite-dimensional and Fr\'echet--Urysohn, so it does not belong to $\dd_{FU}$. Thus $\dd_M\neq\dd_{FU}$ under $\mathrm{MA+\lnot CH}$. Consequently:

\begin{theorem}
\label{th:ddM-ddFU}
If $\mathrm{ZFC}$ is consistent, then the equality $\dd_M=\dd_{FU}$ can be neither proved nor refuted in $\mathrm{ZFC}$:
\begin{itemize}
\item
$\dd_M=\dd_{FU}$ in the Hru\v{s}\'ak--Ramos-Garc\'{\i}a model;
\item
$\dd_M\neq\dd_{FU}$ under $\mathrm{MA+\lnot CH}$.
\end{itemize}
\end{theorem}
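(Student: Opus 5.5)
The theorem has two bullets, and I will treat them separately. The independence statement then follows at once: if ZFC is consistent, so are the Hru\v{s}\'ak--Ramos-Garc\'{\i}a model and ZFC$+\mathrm{MA}+\lnot\mathrm{CH}$. In one of these $\dd_M=\dd_{FU}$ holds and in the other it fails, so neither the equality nor its negation is provable in ZFC.

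\emph{First bullet.} The inclusion $\dd_{FU}\subset\dd_M$ holds in ZFC, because every metrizable space is Fr\'echet--Urysohn. For the reverse inclusion, suppose $L\in\dd_M$ but $L\notin\dd_{FU}$.
\begin{enumerate}
\item Choose an infinite-dimensional Fr\'echet--Urysohn subspace $S\subset L$.
\item Inside $S$, take a countable linearly independent set and let $S_0$ be its linear span. Then $S_0$ is infinite-dimensional and countable-dimensional.
\item $S_0$ is separable. Indeed, the $\mathbb Q$-linear span of the chosen basis is a countable set, and it is dense in $S_0$ because scalar multiplication is continuous.
\item $S_0$ is Fr\'echet--Urysohn, since this property is hereditary to arbitrary subspaces.
\item In the Hru\v{s}\'ak--Ramos-Garc\'{\i}a model every separable Fr\'echet--Urysohn topological group is metrizable \cite{HR2014}. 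Since $S_0$ is in particular a topological group, it is metrizable.
\end{enumerate}
Thus $S_0$ is an infinite-dimensional metrizable subspace of $L$, contradicting $L\in\dd_M$. Hence $\dd_M=\dd_{FU}$ in that model.

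\emph{Second bullet.} Assume $\mathrm{MA}+\lnot\mathrm{CH}$ and take $X\subset\R$ with $|X|=\om_1$. Consider $\cP_X$, the space of real polynomial functions restricted to $X$, with the topology of pointwise convergence on $X$, as in Example~\ref{ex:pol:FU}. I need three facts.

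\begin{enumerate}
\item \emph{$\cP_X$ is infinite-dimensional.} Since $X$ is infinite, the monomials restricted to $X$ are linearly independent.

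\item \emph{$\cP_X\in\dd_M$.} Let $S\subset\cP_X$ be metrizable. Its topology is pointwise convergence on $X$, i.e.\ the weak topology generated by the evaluations $\delta_x$, $x\in X$. So $S$ is a metrizable space carrying a weak topology.
\begin{itemize}
\item A metrizable space has a countable base at $0$, so its topology is generated by countably many evaluations $\delta_x$, $x\in C$, with $C\subset X$ countable.
\item Consequently every $\delta_y$, $y\in X$, restricted to $S$ is a linear combination of finitely many $\delta_x|_S$ with $x\in C$.
\item If $S$ were infinite-dimensional, fix a point $y\in X\setminus C$ different from the finitely many points involved. One should be able to build a polynomial in $S$ that vanishes at those finitely many points of $C$ but not at $y$, contradicting the linear dependence.
\end{itemize}
This is the topological-algebraic criterion for $\dd_M$ that the paper presumably proves later, and I would simply invoke it for $\cP_X$.

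\item \emph{$\cP_X$ is Fr\'echet--Urysohn.} Here $\mathrm{MA}+\lnot\mathrm{CH}$ is used. Since $\om_1<\mathfrak p$, the set $X$ is a $Q$-set and behaves like a set of size below $\mathfrak p$. The classical route is that $C_p(X)$ is Fr\'echet--Urysohn for such $X$, and $\cP_X$ sits inside $C_p(X)$ as a subspace, so it inherits the property. Concretely, given $f\in\cl A$, the sets $\{a\in A: |a(x)-f(x)|<1/n\}$ form a centered family of size $\om_1<\mathfrak p$ with a countable-character requirement. A pseudo-intersection argument then yields a sequence in $A$ converging pointwise to $f$.
\end{enumerate}

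Combining the three facts, $\cP_X\in\dd_M\setminus\dd_{FU}$, so $\dd_M\neq\dd_{FU}$ under $\mathrm{MA}+\lnot\mathrm{CH}$.

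The main obstacle is the Fr\'echet--Urysohn property, specifically getting the $\mathfrak p$-argument to produce a single sequence that converges at all $\om_1$ points simultaneously. The second obstacle is verifying $\cP_X\in\dd_M$ rigorously, which needs the algebraic fact that the evaluations at points of $X$ remain linearly independent on any infinite-dimensional subspace of polynomials. For this I would rely on the Vandermonde argument: polynomials of unbounded degree in $S$ separate finite point sets.
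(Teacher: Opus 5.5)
Your first bullet and the consistency bookkeeping are the paper's argument. In the second bullet, Facts~1 and~3 match Example~\ref{ex:pol:FU}; the paper also gets the Fr\'echet--Urysohn property from (F3) and (F1), exactly as in your ``classical route''. Your direct $\mathfrak p$-argument also works, provided you first shrink $A$ to a countable set (possible because $\cP_X$ has a countable network) and note that finite intersections of the sets $\{a:|a(x)-f(x)|<1/n\}$ are infinite when $f\notin A$. So that step is not really an obstacle.

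\textbf{The gap is in Fact~2.} The ``algebraic fact'' you plan to rely on is false: the evaluations $\de_x$, $x\in X$, need not remain linearly independent on an infinite-dimensional subspace $S\subset\cP_X$. For distinct $a,b\in X$ and $S=\{p\in\cP_X:p(a)=p(b)\}$ one has $\de_a|_S=\de_b|_S$. For $a\in X$ and $S=\{((t-a)q)|_X:q\in\R[t]\}$ one has $\de_a|_S=0$. The second example also shows that, given an infinite-dimensional $S$, a finite $F$ and a point $y\notin F$, there need not exist $p\in S$ with $p|_F=0\neq p(y)$; unbounded degree does not help. So the last step of your sketch fails for an arbitrary $y\in X\setminus C$. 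Moreover, the finite set $F\subset C$ representing $\de_y|_S$ depends on $y$, so ``choosing $y$ different from the points involved'' is circular, and in any case automatic since $y\notin C$.

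\textbf{The missing idea is the right choice of $y$.} This is exactly where finiteness of zero sets and uncountability of $X$ enter. For each finite $F\subset C$, the subspace $K_F=\{p\in S:p|_F=0\}$ has codimension at most $|F|$ in the infinite-dimensional $S$. Hence it contains a nonzero polynomial, and its common zero set $Z_F$ is finite. There are only countably many such $F$, so, $X$ being uncountable, you can choose $y\in X\setminus\bigcup_F Z_F$ \emph{before} writing $\de_y|_S$ as a combination of the $\de_x|_S$, $x\in F$. Then any $p\in K_F$ with $p(y)\neq0$ gives the contradiction.

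The paper packages the same two ingredients as condition $(\cZ_1)$. For nonzero $f_n\in\cP_X$, the set $\liminf_n\zz{f_n}\subset\bigcup_n\zz{f_n}$ is countable, hence different from $X$. Theorem~\ref{th:M} then gives $\cP_X\in\dd_M$ (Proposition~\ref{prop:pol:M}). With that replacement your proof is complete.
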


We consider the following variant of the finite-dimensionality problem. Let $L$ be a locally convex space and $L_w$ the space $L$ with its weak topology. Which conditions on $L$ and $L_w$ imply that $L$ is finite-dimensional?

Let $L$ be a normable space. A classical result going back to Banach \cite{Banach1932} says that if the topologies of $L$ and $L_w$ coincide, then $L$ is finite-dimensional.\footnote{A stronger statement holds: if $L_w$ is normable (and $L$ is an arbitrary LCS), then $L$ is finite-dimensional. Indeed, a normable $L_w$ has a bounded neighbourhood of zero $U$; it contains a subspace $\bigcap_{i\le n}\ker f_i$ for some $f_1,\dots,f_n\in L'$. A bounded subspace is zero, so the map $x\mapsto (f_i(x))_{i\le n}$ is a linear injection $L\to\R^n$.}
This has been strengthened: if $L$ is infinite-dimensional, then $L_w$ is not metrizable (folklore, see \cite[p.~10]{Diestel}) and is not a Baire space \cite{Edgar1984}; if $L$ is Banach, then $L_w$ is not a $k$-space \cite[Proposition~6.4]{KM2017}, is not a Fr\'echet--Urysohn space \cite{SW1991}, and does not have the Pytkeev property \cite[Theorem~1.4]{GKZ2017}; moreover, $L_w$ does not have the Ascoli property and hence is not a $k_{\mathbb R}$-space \cite{GKP2016}. Furthermore, for a normable $L$ every $\varkappa$-Fr\'echet--Urysohn or Baire linear subspace of $L_w$ is finite-dimensional \cite{LRS-TVS}.

In this paper we study LCS with the weak topology that belong to $\dd_M$ and $\dd_{FU}$.
An LCS $L_w$ with its weak topology is realized as a subspace of the space of functions $\R^X$ with the topology of pointwise convergence (that is, the product topology on $\R^X$), where $X=L'$ is the dual space.

For subspaces $L\subset \R^X$ we obtain a criterion for $L\in\dd_M$ (Theorem~\ref{th:M}) and a sufficient condition for $L\in\dd_{FU}$ (Theorem~\ref{th:FU}).

We prove that if $L$ is the space of real-analytic functions on a domain $G\subset \R^n$ with the topology of pointwise convergence, then $L\in\dd_{FU}$ (Proposition~\ref{prop:a-FU}).

If an LCS $V$ admits a continuous norm, then $V_w\in\dd_{FU}$ (Theorem~\ref{th:wlcs:norm-dd-FU}).
In particular, if an LCS $V$ admits a continuous norm and $V_w$ is Fr\'echet--Urysohn, then $V$ is finite-dimensional (Corollary~\ref{cor:wlcs:norm-dd-FU}).
For Banach $V$ this follows from the results of \cite[p.~280]{SW1991} (it is shown there that the weak topology of an infinite-dimensional Banach space is not a $k$-space, while Fr\'echet--Urysohn spaces are $k$-spaces); for normable $V$ it also follows from \cite[Corollary~5.4]{LRS-TVS}.
If the topology of an LCS $V$ is weaker than the topology of some complete norm (for example, $V$ is Banach), then $V'_{w^*}\in\dd_{FU}$ (Theorem~\ref{th:wstar-complete}); completeness of the norm is essential (Remark~\ref{note:wstar-countable}).

It is proved in \cite{LRS-TVS} that if $E$ is a Banach space, then the dual space $E'_{w^\ast}$ with the weak${}^\ast$ topology belongs to $\dd_{FU}$ (see \cite[Theorem~5.8]{LRS-TVS}).
Theorem~\ref{th:wstar-complete} extends this statement to LCS whose topology is weaker than the topology of some complete norm; the proof relies on measure-theoretic methods.

\section{Notation, definitions and preliminaries}\label{sec:pre}

By (topological) spaces we mean Tychonoff spaces. Locally convex spaces (LCS) are always assumed to be Hausdorff.

For a TVS $V$, a subset $L\subset V$ is called a \emph{subspace} if $L$ is a linear subset of $V$; it is endowed with the induced topology, with respect to which $L$ is also a TVS.

For $S\subset V$ we denote by $\env S$ the linear span of $S$.

A set $X\subset\R$ is said to have \emph{absolute} (or \emph{universal}) \emph{measure zero} if $\mu^*(X)=0$ for every continuous (atomless) finite Borel measure $\mu$ on $\R$ (that is, vanishing on singletons); equivalently, there is no nonzero continuous finite Borel measure on $X$ \cite[Theorem~8.2]{Bukovsky}.

A space $X$ has the \emph{property $(\gamma)$} \cite{GerlitsNagy1982} if for every $\om$-cover $\mathcal U$ of $X$ (that is, an open cover of $X$ not containing $X$ as an element such that every finite subset of $X$ is contained in some element of $\mathcal U$) there is a sequence of pairwise distinct elements $U_n\in\mathcal U$, $n\in\om$, such that every point of $X$ belongs to all $U_n$ except finitely many. A subset $X\subset\R$ has \emph{strong measure zero} if for every sequence $\ep_n>0$ there is a cover of $X$ by intervals $I_n$ with lengths $|I_n|<\ep_n$, $n\in\om$. A space $X$ has the \emph{Rothberger property} if for every sequence $(\mathcal U_n)_{n\in\om}$ of open covers of $X$ one can choose $U_n\in\mathcal U_n$ such that $\{U_n:n\in\om\}$ covers $X$. We shall need the following known facts.
\begin{itemize}
\item[(F1)] The space $C_p(X)$ is Fr\'echet--Urysohn if and only if $X$ has the property $(\gamma)$ \cite{GerlitsNagy1982}.
\item[(F2)] If $X\subset\R$ has the property $(\gamma)$, then $X$ has the Rothberger property \cite{GerlitsNagy1982} (see \cite[Theorem~8.109]{Bukovsky}), hence has strong measure zero \cite[Theorem~8.102]{Bukovsky}, and therefore is of absolute measure zero \cite[Theorem~8.11]{Bukovsky}. In particular, if $X\subset\R$ and $C_p(X)$ is a Fr\'echet--Urysohn space, then $X$ has strong measure zero and, therefore, is of absolute measure zero.
\item[(F3)] Under $\mathrm{MA}+\neg\mathrm{CH}$ we have $\om_1<\mathfrak p=\mathfrak c$, and every $X\subset\R$ of cardinality $<\mathfrak p$ (in particular, of cardinality $\om_1$) has the property $(\gamma)$ (Galvin--Miller \cite{GalvinMiller1984}; see \cite[Theorem~8.90, Corollary~9.4]{Bukovsky}).
\end{itemize}

A set is called \emph{countable} if it is equinumerous with $\om$, and \emph{at most countable} if it is finite or countable. A linear space is said to be \emph{countable-dimensional} if it has a finite or countable basis (that is, its algebraic dimension does not exceed $\aleph_0$); an infinite-dimensional countable-dimensional space has a countable basis. By a \emph{basis} of a linear space we always mean an algebraic (Hamel) basis, not a Schauder basis.

Let $(X,\cS)$ be a measurable space, where $\cS$ is a $\si$-algebra on $X$.
For a positive measure $\mu$ on $(X,\cS)$ we denote by $\cL^0(X,\mu)$ the set of $\mu$-measurable real-valued functions.

A measure $\mu$ on $X$ is called \emph{nonzero} if $\mu(X)>0$, and \emph{$\si$-finite} if there is a sequence $(X_n)_n$ of $\mu$-measurable subsets of $X$ such that $X=\bigcup_n X_n$ and $\mu(X_n)<\infty$ for all $n$.
Two measures on $(X,\cS)$ are called \emph{equivalent} if for every $M\in \cS$, $\mu(M)=0$ if and only if $\nu(M)=0$.

\begin{proposition}[{\cite[Lemma 4.2.1]{Kon2017}}]
\label{prop:pre:eq-me}
Every nonzero $\sigma$-finite measure is equivalent to a probability measure.
\end{proposition}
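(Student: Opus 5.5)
The plan is the standard density construction. Let $\mu$ be a nonzero $\si$-finite measure on $(X,\cS)$. By $\si$-finiteness, fix a sequence $(X_n)_{n\in\om}$ of measurable sets with $X=\bigcup_n X_n$ and $\mu(X_n)<\infty$. Replacing $X_n$ by $X_n\setminus\bigcup_{k<n}X_k$, I will assume the $X_n$ are pairwise disjoint. Since $\mu(X)>0$, countable additivity forces $\mu(X_n)>0$ for at least one $n$.

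Next I define a strictly positive measurable weight
\[
h=\sum_{n\in\om} c_n\,\chi_{X_n},\qquad c_n=\frac{2^{-n}}{1+\mu(X_n)}>0 .
\]
Then $0<\int_X h\,d\mu=\sum_n c_n\mu(X_n)\le\sum_n 2^{-n}<\infty$, where the first inequality comes from the index $n$ with $\mu(X_n)>0$. Put $Z=\int_X h\,d\mu$ and set $\nu(M)=Z^{-1}\int_M h\,d\mu$ for $M\in\cS$. The monotone convergence theorem gives countable additivity of $\nu$, and $\nu(X)=1$, so $\nu$ is a probability measure.

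It remains to check equivalence. If $\mu(M)=0$, then $\int_M h\,d\mu=0$, so $\nu(M)=0$. Conversely, suppose $\nu(M)=0$. Then $\int_M h\,d\mu=0$, and since $h>0$ everywhere we get $\mu(M\cap\{h>1/k\})=0$ for every $k$. Taking the union over $k$ yields $\mu(M)=0$.

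There is no real obstacle here. The only points needing care are that $h$ is strictly positive on all of $X$, which is guaranteed because the $X_n$ cover $X$ and every $c_n>0$, and that the normalizing constant $Z$ is finite and nonzero, which the bound above and the nonzero hypothesis ensure.
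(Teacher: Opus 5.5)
The paper gives no proof of its own here, only the reference \cite[Lemma 4.2.1]{Kon2017}, and your argument is the standard proof of that fact: normalize $h\,d\mu$ for a strictly positive weight $h$ with $0<\int_X h\,d\mu<\infty$. Every step checks out (disjointification, positivity and finiteness of $Z$, countable additivity via monotone convergence, and the converse null-set implication via the sets $\{h>1/k\}$), so the proposal is correct as written.
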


Let $X$ be a set and $(M_n)_n$ a sequence of subsets of $X$. We denote the \emph{lower limit of a sequence of sets} by
\[
\liminf_{n\to\infty} M_n = \bigcup_{m=1}^\infty \bigcap_{n=m}^\infty M_n.
\]
Recall the definition of the lower limit of a sequence of numbers
\[
\liminf_{n\to\infty} x_n =  \sup_{n \geq 1}\,\inf_{m \geq n} x_m = \sup\,\{\, \inf\,\{\,x_m : m \geq n\,\} : n \geq 1\,\}.
\]

The following statement follows from Fatou's lemma, but we give a direct proof.

\begin{proposition}
\label{prop:pre:liminf-me}
Let $(X,\cS)$ be a measurable space, $\mu$ a measure on $X$ and $(M_n)_{n\in\N}$ a sequence of elements of $\cS$. Then
\[
\mu \bigl(\liminf_{n\rightarrow \infty }M_{n}\bigr)\le \liminf _{n\rightarrow \infty }\mu \bigl(M_{n}\bigr).
\]
\end{proposition}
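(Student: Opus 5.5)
Set $A_m=\bigcap_{n\ge m} M_n$ for $m\in\N$. Each $A_m$ is a countable intersection of elements of $\cS$, so $A_m\in\cS$. The sequence $(A_m)_m$ is increasing, since $A_{m+1}$ is an intersection over fewer indices than $A_m$. By definition,
\[
\liminf_{n\to\infty} M_n=\bigcup_{m=1}^\infty A_m .
\]
Continuity of the measure from below, which needs only countable additivity and no finiteness assumption, then gives
\[
\mu\Bigl(\liminf_{n\to\infty} M_n\Bigr)=\lim_{m\to\infty}\mu(A_m)=\sup_{m}\mu(A_m).
\]

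Next, for each fixed $m$ and every $k\ge m$ we have $A_m\subset M_k$. Monotonicity gives $\mu(A_m)\le \mu(M_k)$ for all such $k$, and hence $\mu(A_m)\le \inf_{k\ge m}\mu(M_k)$. Taking the supremum over $m$ and using the definition of the lower limit of a numerical sequence recalled above, we get
\[
\sup_m \mu(A_m)\le \sup_m \inf_{k\ge m}\mu(M_k)=\liminf_{n\to\infty}\mu(M_n).
\]
Combining this with the previous display proves the inequality.

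Nothing in this argument is a real obstacle. The one point to watch is that the values may equal $+\infty$, since $\mu$ is an arbitrary measure. Continuity from below and all the inequalities above remain valid in $[0,+\infty]$, so no finiteness hypothesis is needed.
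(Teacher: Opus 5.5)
Your proof is correct and follows essentially the same argument as the paper. Both use the increasing sets $\bigcap_{n\ge m}M_n$, continuity from below to get $\mu(\liminf_n M_n)=\sup_m\mu(A_m)$, and monotonicity to get $\mu(A_m)\le\inf_{n\ge m}\mu(M_n)$; your explicit remark that everything holds in $[0,+\infty]$ is a correct, harmless addition.
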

\begin{proof}
Put $I_m=\bigcap_{n=m}^\infty M_n$. Then
\[
\liminf_{n\to\infty} M_n = \bigcup_{m=1}^\infty I_m,
\]
and the sequence of sets $(I_m)$ is increasing. Hence
\[
\mu \bigl(\liminf_{n\to\infty}M_{n}\bigr)=\sup_{m\geq 1}\mu(I_m).
\]
The required inequality follows from the fact that $I_m\subset M_n$ for $n\geq m$, and hence $\mu(I_m)\leq \inf_{n\geq m}\mu(M_n)$.
\end{proof}

For $f\in \R^X$ we denote
\[
\supp f = \{ x\in X: f(x)\neq 0\}
\]
the \emph{support} of $f$.

An indexed family $(U_\al)_{\al\in A}$ of subsets of a set $X$ is called \emph{\mbox{point-finite}} if 
$|\{\al\in A: x\in U_\al\}|<\om$ for all $x\in X$ (repetitions are counted). Below, the notation $\{\supp u_n:n\in\om\}$ stands for the indexed family $(\supp u_n)_{n\in\om}$.

For an LCS $V$ we denote by $V'$ the dual space and put
\[
c\colon V\to V'',\ c(v)(f)=f(v)\ \text{ for }v\in V\text{ and }f\in V'
\]
the canonical embedding of $V$ into $V''$.
Since $V$ is Hausdorff, by the Hahn--Banach theorem $V'$ separates the points of $V$, so $c$ embeds $V_w$ into $\R^{V'}$.
We write $\cc v$ for $c(v)$.
In particular
\[ \zz {\cc v}  = \{\te\in V': \te(v)=0\}. \]

\section{The classes $\dd_M$ and $\dd_{FU}$ in function spaces}\label{sec:fun}

Let $X$ be a set and $L$ a subspace of $\R^X$. The sets
\[
W(M,\ep)=\{ f\in L: f(M) \subset (-\ep,+\ep)\},
\]
where $M$ is a finite subset of $X$ and $\ep>0$, form a base of the topology of $L$ at zero.
For $x\in X$ denote by
\[
\de_x: L\to \R,\ f \mapsto f(x)
\]
the evaluation functional at the point $x$.

\begin{lemma}
\label{l:L-fun}
Let $\te\in L'$ and $W(M,\ep)\subset \te^{-1}((-1,1))$ for some $\ep>0$ and a finite set $M=\{x_1,...,x_n\}\subset X$. Then $\te=\sum_{i=1}^n \la_i \de_{x_i}$ for some $\la_1,...,\la_n\in\R$.
\end{lemma}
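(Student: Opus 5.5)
The plan is the standard kernel-inclusion argument from linear algebra. Let $N=\bigcap_{i=1}^n \ker \de_{x_i}=\{f\in L: f(x_i)=0 \text{ for } i\le n\}$. I first show $N\subset\ker\te$. Every $f\in N$ lies in $W(M,\ep)$, since $f(M)=\{0\}\subset(-\ep,\ep)$. Because $N$ is a linear subspace, $tf\in N\subset W(M,\ep)$ for every $t\in\R$. The hypothesis then gives $|t\,\te(f)|=|\te(tf)|<1$ for all $t\in\R$, and this forces $\te(f)=0$.

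Next I reduce to finite dimensions. Consider the linear map $T\colon L\to\R^n$, $T(f)=(f(x_1),\dots,f(x_n))$. Its kernel is $N\subset\ker\te$, so $\te$ factors through $T$: there is a linear functional $\psi$ on the subspace $T(L)\subset\R^n$ with $\te=\psi\circ T$. This $\psi$ is well defined, because $T(f)=T(g)$ gives $f-g\in N$ and hence $\te(f)=\te(g)$.

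Finally, I extend $\psi$ linearly from $T(L)$ to all of $\R^n$, which is possible since every linear subspace of $\R^n$ has a complement. Any linear functional on $\R^n$ has the form $(y_1,\dots,y_n)\mapsto\sum_i\la_i y_i$. Composing with $T$ gives $\te(f)=\sum_{i=1}^n\la_i f(x_i)=\sum_i\la_i\de_{x_i}(f)$ for all $f\in L$, as required.

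None of these steps is a real obstacle. The only point that needs care is the scaling argument that turns the bound $|\te|<1$ on $W(M,\ep)$ into $\te=0$ on $N$. It uses only that $N$ is a subspace contained in $W(M,\ep)$. Continuity of $\te$ is not needed beyond the stated inclusion, and the coefficients $\la_i$ need not be unique when the $\de_{x_i}$ are linearly dependent on $L$.
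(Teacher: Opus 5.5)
Your proposal is correct and follows essentially the same route as the paper: show by scaling that the subspace of functions vanishing on $M$ lies in $\ker\te$, factor $\te$ through the restriction map $L\to\R^M$, and extend the induced functional from the image to all of $\R^n$. Your side remarks are also accurate: the coefficients $\la_i$ need not be unique, and continuity of $\te$ is used only through the stated inclusion.
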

\begin{proof}
Put $S=\{f\in L: f(M)=\{0\}\}$. If $f\in S$, then $k f\in W(M,\ep)$ for all $k\in\R$.
Hence $k \te(f)= \te(kf)\in (-1,1)$ for all $k\in\R$. It follows that $\te(f)=0$.
That is, $S\subset\ker\te$. Put $\pi_M\colon L\to\R^M$, $f\mapsto f|_M$. Since $\ker\pi_M=S\subset\ker\te$, there is a linear functional $\te'$ on $\pi_M(L)$ such that $\te=\te'\circ\pi_M$; extend $\te'$ to a linear functional on $\R^M$. Hence $\te=\sum_{i=1}^n \la_i \de_{x_i}$ for some $\la_1,...,\la_n\in\R$.
\end{proof}

This lemma implies the following statement.

\begin{lemma}
\label{l:L-dual}
The set $\{\de_x:x\in X\}$ generates $L'$, that is, every functional $\te\in L'$ can be represented in the form $\sum_{i=1}^n \la_i \de_{x_i}$.
\end{lemma}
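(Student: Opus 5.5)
The plan is to reduce the statement to Lemma~\ref{l:L-fun} by finding, for an arbitrary $\te\in L'$, a basic neighbourhood of zero on which $\te$ is small. So fix $\te\in L'$. Since $\te$ is continuous at zero, the set $\te^{-1}((-1,1))$ is an open neighbourhood of zero in $L$. The sets $W(M,\ep)$, with $M\subset X$ finite and $\ep>0$, form a base of the topology of $L$ at zero. Hence there are a finite set $M=\{x_1,\dots,x_n\}\subset X$ and an $\ep>0$ with $W(M,\ep)\subset\te^{-1}((-1,1))$.

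These are exactly the hypotheses of Lemma~\ref{l:L-fun}. It gives $\te=\sum_{i=1}^n\la_i\de_{x_i}$ for some real numbers $\la_1,\dots,\la_n$. So every element of $L'$ lies in the linear span $\env{\{\de_x:x\in X\}}$.

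For the reverse inclusion, note that each $\de_x$ is linear and is continuous for the pointwise topology, since it is the restriction to $L$ of a coordinate projection of $\R^X$. Therefore $\env{\{\de_x:x\in X\}}\subset L'$, and the two sets are equal.

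The only point that needs care is the degenerate case $M=\emptyset$. Then $W(\emptyset,\ep)=L$, and the argument in the proof of Lemma~\ref{l:L-fun} gives $\te=0$, which is the empty sum. Equivalently, one may enlarge $M$ by one point, since shrinking the neighbourhood preserves the inclusion. I do not expect any real obstacle beyond this.
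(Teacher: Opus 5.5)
Your proposal is correct and follows the paper's proof: continuity of $\te$ gives a basic neighbourhood $W(M,\ep)\subset\te^{-1}((-1,1))$, and Lemma~\ref{l:L-fun} then yields $\te=\sum_{i=1}^n\la_i\de_{x_i}$. Your additional remarks are correct but not needed for the argument: the reverse inclusion (each $\de_x$ is continuous) and the degenerate case $M=\emptyset$, where Lemma~\ref{l:L-fun} already gives $\te=0$ as the empty sum.
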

\begin{proof}
Since $\te$ is continuous, $W(M,\ep)\subset \te^{-1}((-1,1))$ for some finite $M\subset X$ and $\ep>0$. It remains to apply Lemma~\ref{l:L-fun}.
\end{proof}

\begin{lemma}
\label{l:L-metr-1}
The following conditions are equivalent:
\begin{enumerate}
\item[\rm (i)]
the space $L$ is metrizable;
\item[\rm (ii)]
the set $\{\de_x:x\in T\}$ forms a basis of $L'$ for some at most countable $T\subset X$;
\item[\rm (iii)]
the space $L'$ is countable-dimensional.
\end{enumerate}
\end{lemma}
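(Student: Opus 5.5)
I will prove the cycle of implications (i)$\rarr$(ii)$\rarr$(iii)$\rarr$(i). Throughout, $L\subset\R^X$ carries the topology of pointwise convergence, and $L'$ is spanned by the evaluations $\de_x$ by Lemma~\ref{l:L-dual}.

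For (i)$\rarr$(ii), suppose $L$ is metrizable. Then it has a countable base at zero, and since the sets $W(M,\ep)$ form a base at zero, we can choose a countable base of the form $W(M_k,1/k)$ with finite $M_k\subset X$. Put $T_0=\bigcup_k M_k$; this set is at most countable. Every $\te\in L'$ satisfies $W(M_k,1/k)\subset\te^{-1}((-1,1))$ for some $k$, so Lemma~\ref{l:L-fun} writes $\te$ as a linear combination of $\de_x$ with $x\in M_k\subset T_0$. Hence $\{\de_x:x\in T_0\}$ spans $L'$. To obtain a basis, I will extract a linearly independent spanning subfamily. Enumerate $T_0$ and keep $x$ exactly when $\de_x$ is not in the span of the previously kept evaluations. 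The resulting set $T\subset T_0$ is at most countable, and $\{\de_x:x\in T\}$ is linearly independent and still spans $L'$. One point needs care: distinct $x$ may give equal or linearly dependent functionals $\de_x$ on $L$. The greedy selection handles this automatically.

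The implication (ii)$\rarr$(iii) is immediate.

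For (iii)$\rarr$(i), let $(\te_n)$ be an at most countable basis of $L'$. By Lemma~\ref{l:L-dual}, each $\te_n$ is a finite combination of evaluations, so there is an at most countable $T\subset X$ such that every $\de_x$, $x\in X$, lies in the span of $\{\de_t:t\in T\}$. Consider the restriction map $\pi_T\colon L\to\R^T$, $f\mapsto f|_T$, which is continuous. It is injective: if $f|_T=0$, then every $\de_x(f)$ is a combination of values $f(t)$, $t\in T$, so $f=0$.

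It remains to check that $\pi_T$ is open onto its image, so that it is a topological embedding. A basic neighbourhood $W(M,\ep)$ involves finitely many points $x\in M$. Write each such $\de_x=\sum_i\la_{x,i}\de_{t_i}$ with finitely many $t_i\in T$, and let $C$ bound the sums $\sum_i|\la_{x,i}|$ over $x\in M$. Then for a suitable finite $N\subset T$ we get $W(N,\ep/C)\subset W(M,\ep)$, so the topology of $L$ coincides with the one induced from $\R^T$. Since $\R^T$ is metrizable for countable $T$, $L$ is metrizable.

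The main technical point is this last embedding estimate, which rests on Lemma~\ref{l:L-dual}; everything else is routine bookkeeping.
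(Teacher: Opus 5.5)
Your proof is correct and essentially the paper's. For (i)$\rarr$(ii) you take a countable base of sets $W(M_k,\cdot)$, apply Lemma~\ref{l:L-fun}, and extract a maximal linearly independent subfamily. For (iii)$\rarr$(i) you observe that the topology of $L$ is the initial topology of a countable family of functionals whose span contains every $\de_x$; you use evaluations at a countable $T\subset X$ and check the comparison of topologies by hand, while the paper uses the basis $\{\te_n\}$ directly. One cosmetic fix: take $C\ge 1$, so that $\ep/C$ is defined even when all the coefficients $\la_{x,i}$ vanish.
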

\begin{proof}
\hyphenpenalty=10000
(i)$\rarr$(ii)
Since $L$ is metrizable, it has a countable neighbourhood base at zero consisting of sets of the form $W(M_n,\ep_n)$, $n\in\om$. Put $Y=\bigcup_{n\in\om} M_n$. Let $\te\in L'$. Since $\te$ is continuous, $W(M,\ep)\subset \te^{-1}((-1,1))$ for some $\ep>0$ and a finite set $M=\{x_1,...,x_n\}\subset Y$. By Lemma~\ref{l:L-fun}, $\te=\sum_{i=1}^n \la_i \de_{x_i}$ for some $\la_1,...,\la_n\in\R$. 
Choose $T\subset Y$ such that $\{\de_x:x\in T\}$ is a maximal linearly independent system in $\{\de_x:x\in Y\}$.
Then $\{\de_x:x\in T\}$ is a basis of $\env{\{\de_x:x\in Y\}}$. As shown above, $L'=\env{\{\de_x:x\in Y\}}$. Hence $\{\de_x:x\in T\}$ is a basis of $L'$; the set $T$ is at most countable, since $Y$ is.

\hyphenpenalty=50
(ii)$\rarr$(iii) 
Obvious.

(iii)$\rarr$(i) 
The topology of $L$ is the weak topology with respect to the family $\{\de_x:x\in X\}\subset L'$, hence also with respect to any family of functionals whose linear span contains all $\de_x$, for example with respect to an at most countable basis $\{\te_n:n\in I\}$ of $L'$, where $I\subset\om$ (by Lemma~\ref{l:L-dual}, each $\de_x$ is a linear combination of the $\te_n$). Therefore $L$ embeds into $\R^I$ and is metrizable.
\end{proof}

\begin{lemma}
\label{l:L-metr}
\emergencystretch=1.5em Let $L$ be infinite-dimensional and countable-dimensional.
The space $L$ is metrizable if and only if there is a Hamel basis ${\{u_n:n\in\om\}\subset L}$ such that the family $\{\supp u_n:n\in \om\}$ is point-finite.
\end{lemma}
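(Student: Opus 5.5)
We prove the two implications separately, using Lemma~\ref{l:L-metr-1} to translate metrizability into the existence of an at most countable $T\subset X$ such that $\{\de_x:x\in T\}$ is a basis of $L'$.

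For ($\larr$), let $\{u_n:n\in\om\}$ be a Hamel basis of $L$ whose supports form a point-finite family. Each $\de_x$ then annihilates all but finitely many $u_n$. We define $\te_n\in L^\ast$ (algebraic dual) as the coordinate functionals, $\te_n(\sum_k c_k u_k)=c_n$. Then
\[
\de_x=\sum_{n\colon x\in\supp u_n} u_n(x)\,\te_n ,
\]
a finite sum, so $\de_x\in\env{\{\te_n:n\in\om\}}$. Since the topology of $L$ is the weak topology generated by $\{\de_x\}$, it is coarser than the one generated by the $\te_n$, which gives the embedding $L\to\R^\om$. Then $L$ is metrizable: every $W(M,\ep)$ contains a set of the form $\{f:|\te_n(f)|<\eta,\ n\le N\}$, but we need the converse inclusion to get a countable base. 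Here I would argue more carefully via Lemma~\ref{l:L-metr-1}(iii), showing that $L'$ is countable-dimensional. By Lemma~\ref{l:L-dual}, $L'=\env{\{\de_x\}}\subset\env{\{\te_n\}}$, which is countable-dimensional. That settles this direction cleanly.

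For ($\rarr$), suppose $L$ is metrizable. By Lemma~\ref{l:L-metr-1} there is an at most countable $T=\{x_0,x_1,\dots\}$ such that $\{\de_{x_i}\}$ is a basis of $L'$. Since $L$ is infinite-dimensional and the $\de_x$ separate points, the map $f\mapsto (f(x_i))_i$ is injective. As $L$ is infinite-dimensional, $T$ must be infinite, so $L'$ is countably infinite-dimensional with basis $\de_{x_i}$. We then build a biorthogonal system: we want $u_n\in L$ with $\de_{x_i}(u_n)=0$ for $i>n$, triangular in the right sense, so that $\{u_n\}$ is a Hamel basis. The natural plan is Gram--Schmidt-type: let $L_n=\{f\in L: f(x_i)=0 \text{ for } i<n\}$. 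These are subspaces of codimension exactly $n$, by linear independence of the $\de_{x_i}$ restricted to $L$, with $\bigcap_n L_n=0$. Choose $u_n\in L_n\setminus L_{n+1}$. Then $u_n(x_i)=0$ for $i<n$, so each $x_i$ lies in $\supp u_n$ only for $n\le i$.

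Point-finiteness must hold at every $x\in X$, not only on $T$. Since $\de_x=\sum_{i\le m}\la_i\de_{x_i}$, we have $u_n(x)=0$ for $n>m$, so point-finiteness holds everywhere. It remains to check that $\{u_n\}$ is a Hamel basis; this is the main obstacle. Linear independence follows from the triangular structure: $u_n(x_n)\ne0$ and $u_k(x_n)=0$ for $k>n$. Spanning is the delicate point. Given $f\in L$, subtract multiples of $u_0,u_1,\dots$ successively to kill $f(x_0),f(x_1),\dots$. We need this process to terminate, which is not automatic. Here countable-dimensionality of $L$ is used: fix a Hamel basis $\{e_k\}$ of $L$ and instead choose the $u_n$ cleverly by interleaving. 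Take $u_n$ from $\env{e_0,\dots,e_{k_n}}\cap L_n$, arranged so that each $e_k$ lies in $\env{u_0,\dots,u_N}$ for some $N$. Concretely, for each $k$ the space $E_k=\env{e_0,\dots,e_k}$ is finite-dimensional and $E_k\cap L_n=0$ for large $n$, since $\bigcap L_n=0$. So $E_k$ embeds into $L/L_{N}$ for large $N$, and we choose $u_n$ inductively inside increasing $E_k$ with $u_0,\dots,u_{N-1}$ representing a basis of $L/L_N$ modulo $L_N$. This gives $E_k\subset\env{u_0,\dots,u_{N-1}}$, hence the span is all of $L$.
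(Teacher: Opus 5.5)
\textbf{Verdict: there is a genuine gap in ($\rarr$), at the spanning step.}

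Your ($\larr$) direction is correct. It is the paper's argument in dual form: the paper maps $L'$ injectively into finitely supported sequences by $\te\mapsto(\te(u_n))_n$, while you write each $\de_x$ as a finite combination of coordinate functionals. In ($\rarr$), choosing $u_n\in L_n\setminus L_{n+1}$ does give linear independence and point-finiteness at every $x\in X$. The gap is the step you yourself flag as delicate: that the $u_n$ span $L$. Your justification for it is a non sequitur.

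From $E_k\cap L_N=0$ and the fact that $u_0,\dots,u_{N-1}$ represent a basis of $L/L_N$, you only get $E_k\subset\env{u_0,\dots,u_{N-1}}+L_N=L$, which says nothing. For example, if $e_0(x_0)\ne0$, then $N=1$ works for $E_0$. Every $u_0$ with $u_0(x_0)\ne0$ represents a basis of $L/L_1$, yet $e_0\in\env{u_0}$ only when $u_0$ is a multiple of $e_0$.

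Your instruction ``choose $u_n$ inside increasing $E_k$'' does not repair this. The least $N$ with $E_k\cap L_N=0$ can exceed $\dim E_k=k+1$ (take $e_0$ vanishing at $x_0,x_1,x_2$). Then $u_0,\dots,u_{N-1}$ cannot all lie in $E_k$, and nothing in your description forces $E_k$ into their span.

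Spanning genuinely fails without a rule. Take $X=\om$, $x_i=i$, and let $L$ be spanned by the constant function $1$ and the indicators $\chi_{\{n\}}$, $n\in\om$. The choice $u_n=\chi_{\{n\}}$ satisfies $u_n\in L_n\setminus L_{n+1}$, but the $u_n$ miss the constant $1$.

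What is missing is an elimination step that puts each basis vector into the span by construction. Within your framework it goes as follows.
\begin{enumerate}
\item Suppose $u_0,\dots,u_{N-1}$ have been chosen with $u_i\in L_i\setminus L_{i+1}$.
\item Subtract multiples of them from $e_k$ to obtain $e_k'$ vanishing at $x_0,\dots,x_{N-1}$.
\item If $e_k'\ne0$ and its first nonzero value is at $x_m$ ($m\ge N$), put $u_m=e_k'$. Fill $u_N,\dots,u_{m-1}$ with arbitrary elements of $L_i\setminus L_{i+1}$.
\end{enumerate}
Then $e_k\in\env{u_0,\dots,u_m}$. Since $L$ is infinite-dimensional, the indices exhaust $\om$, and you get a Hamel basis.

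The paper instead gives up exact triangularity. From a Hamel basis $\{v_m\}$ it sets $u_m=v_m-w_m$, where $w_m\in\env{v_0,\dots,v_{m-1}}$ is chosen so that $u_m$ vanishes at $t_0,\dots,t_{l_m-1}$ (your $x_i$). Here $l_m$ is the largest $l$ for which $f\mapsto(f(t_0),\dots,f(t_{l-1}))$ already maps $\env{v_0,\dots,v_{m-1}}$ onto $\R^{l}$. Unitriangularity with respect to $\{v_m\}$ makes $\{u_m\}$ a Hamel basis for free, and $l_m\to\infty$ is all that point-finiteness requires.
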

\begin{proof}
$(\rarr)$ 
By Lemma~\ref{l:L-metr-1} there is a countable set $T=\{t_n:n\in\om\}\subset X$ such that $\{\de_x:x\in T\}$ is a basis of $L'$. Since $L$ is infinite-dimensional and $L'$ separates the points of $L$ (for example, by the functionals $\de_x$), $L'$ is infinite-dimensional (otherwise, taking a basis $\te_1,\dots,\te_k$ of $L'$, we would obtain a linear injection $L\to\R^k$, $v\mapsto(\te_i(v))_{i\le k}$). Hence $T$ is infinite.
 
Let $\{v_n:n\in\om\}$ be a Hamel basis of $L$. 

Let $n\in\om$. 
Denote $T_n=\{t_0,...,t_{n-1}\}$. Put
\[
\pi_n\colon L\to \R^{T_n},\ f\mapsto f|_{T_n}.
\] 
Since $\{\de_{t_k}:k\in\om\}$ is linearly independent, $\pi_n(L)=\R^{T_n}$ (the linearly independent functionals $\de_{t_0},\dots,\de_{t_{n-1}}$ define a surjection $L\to\R^{n}$). 
Put
\[
m_n = \min \{m\in\om: \pi_n(\env{v_0,v_1,...,v_{m-1}})=\R^{T_n}\}.
\]
Clearly $m_n\geq n$ (since $\dim\R^{T_n}=n$).

The sequence $(m_n)$ is nondecreasing (the projection $\R^{T_{n+1}}\to\R^{T_n}$ is surjective). Let $m\in\om$. Put
\[
l_m = \max \{ k\in\om: m_k \leq m \}
\]
(the maximum exists, since $m_0=0\le m$ and $m_k\geq k$). Then $l_m\leq m_{l_m}\leq m$,
\[
\pi_{l_m}(\env{v_0,v_1,...,v_{m-1}})=\R^{T_{l_m}}
\]
and $l_m\to \infty$ as $m\to \infty$ (if $m\geq m_k$, then $l_m\geq k$).
Choose $w_m\in \env{v_0,v_1,...,v_{m-1}}$ such that $\pi_{l_m}(w_m)=\pi_{l_m}(v_m)$.
Put $u_m=v_m-w_m$. Then $\{u_n:n\in\om\}$ is a Hamel basis of $L$ and $\pi_{l_m}(u_m)=0$, that is, $u_m|_{T_{l_m}}=0$.

Let us show that the family $\{\supp u_n:n\in \om\}$ is point-finite.
Let $x\in X$. Since the set $\{\de_{t_n}:n\in\om\}$ generates $L'$, we have $\de_x\in \env{\{\de_t:t\in T_l\}}$ for some $l\in\om$.
Since $l_m\to \infty$, there is $N\in\om$ such that $l_m\geq l$ for $m\geq N$.

Let $m\geq N$.
Then $u_m|_{T_{l}}=0$. Since $\de_x\in \env{\{\de_t:t\in T_l\}}$, we get $u_m(x)=\de_x(u_m)=0$.

We obtain $|\{m\in \om: u_m(x)\neq 0\}|\leq N$, that is, the family $\{\supp u_n:n\in \om\}$ is point-finite.

$(\larr)$ 
Let $\{u_n:n\in\om\}\subset L$ be a Hamel basis of $L$ such that the family $\{\supp u_n:n\in \om\}$ is point-finite.
Put
\[
\psi\colon L'\to \R^\om, \ \te \mapsto (\te(u_n))_{n\in\om}.
\]
The map $\psi$ is linear and injective. Point-finiteness of the family of supports implies that
\[
\psi(\de_x)\in S =\{(x_n)_{n\in\om}: |\{n\in\om:x_n\neq 0\}|<\om\}
\]
for every $x\in X$. 
By Lemma~\ref{l:L-dual}, $\psi(L')\subset S$.
The space $S$ is the $\si$-product in $\R^\om$ and is countable-dimensional.
Since $\psi$ is linear and injective, the space $L'$ is linearly isomorphic to a subspace of $S$. Hence $L'$ is countable-dimensional.
\end{proof}

\begin{lemma}
\label{l:L-pfs}
If there is a sequence $\{u_n:n\in\om\}\subset L\setminus\{0\}$ such that the family $\{\supp u_n:n\in \om\}$ is point-finite, then $L$ is infinite-dimensional.
\end{lemma}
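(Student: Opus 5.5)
The plan is to argue by contradiction. Suppose $L$ is finite-dimensional, say $\dim L=k$. We will show that the vectors $u_n$ cannot all be nonzero while their supports form a point-finite family.

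\textbf{Reduction to finitely many points.} Since $L$ is finite-dimensional, the restriction maps
\[
\pi_M\colon L\to\R^M,\quad f\mapsto f|_M,
\]
for finite $M\subset X$ have kernels forming a downward-directed family of subspaces of $L$. Their intersection is $\{0\}$, because every nonzero $f$ has $f(x)\ne0$ for some $x$. A decreasing chain of subspaces of a finite-dimensional space stabilizes. Hence there is a finite set $M=\{x_1,\dots,x_m\}\subset X$ with $\ker\pi_M=\{0\}$. Equivalently, every nonzero $f\in L$ satisfies $f(x_i)\neq0$ for some $i\le m$. Concretely, choose $M$ so that $\dim\ker\pi_M$ is minimal. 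If some nonzero $f\in\ker\pi_M$ existed, pick $x$ with $f(x)\ne0$; then $\ker\pi_{M\cup\{x\}}$ is strictly smaller, contradicting minimality.

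\textbf{Pigeonhole.} Each $u_n$ is nonzero, so $\supp u_n\cap M\neq\emptyset$ for every $n\in\om$. Thus every $n$ is assigned some point $x_{i(n)}\in M$ lying in $\supp u_n$. Since $M$ is finite, some single $x_i$ belongs to $\supp u_n$ for infinitely many $n$. This contradicts point-finiteness of $\{\supp u_n:n\in\om\}$. Note that repetitions are counted in the definition of point-finiteness, so the argument works even if the $u_n$ are not distinct.

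\textbf{Main obstacle.} The only nontrivial step is producing the finite separating set $M$. This rests on the finite-dimensional stabilization (or minimality) argument above, and that step is where finite-dimensionality of $L$ is used.
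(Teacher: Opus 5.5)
Your argument is correct and has the same skeleton as the paper's: find a finite set on which restriction is injective on $L$, then play point-finiteness against that finite set. Your pigeonhole step is just the contrapositive of the paper's observation that $u_n|_Y=0$ for all but finitely many $n$.

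The two proofs differ only in how that finite set is produced.
\begin{itemize}
\item The paper takes a basis $\te_1,\dots,\te_n$ of $L'$ and uses Lemma~\ref{l:L-dual} to write each $\te_i$ through finitely many evaluations. This gives a finite $Y$ with $L'=\env{\{\de_x:x\in Y\}}$, so a function vanishing on $Y$ is annihilated by every $\de_x$ and is therefore zero. This route relies on the description of the continuous functionals on $L$, i.e.\ on the topology.
\item You choose $M$ minimizing $\dim\ker\pi_M$. This is purely linear-algebraic and bypasses $L'$ and Lemma~\ref{l:L-dual} entirely. It is a bit more self-contained and makes clear that the lemma is an algebraic fact about linear subspaces of $\R^X$.
\end{itemize}

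One minor point: your remark about decreasing chains stabilizing is imprecise, since the kernels form a directed family rather than a chain. The minimality argument you give right after is rigorous, though, and is all that is needed.
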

\begin{proof}
Suppose the contrary. Let $\te_1,\te_2,\dots,\te_n$ be a basis of $L'$. By Lemma~\ref{l:L-dual} there is a finite $Y\subset X$ such that $\te_i\in K=\env{\{\de_x:x\in Y\}}$ for $i=1,...,n$. Then $L'=K$. Since the family $\{\supp u_n:n\in \om\}$ is point-finite and $Y$ is finite, $u_n|_Y=0$ for all but finitely many $n$; choose such an $m$. Then $\te(u_m)=0$ for all $\te\in K=L'$. But $L'$ separates the points of $L$ (for example, by the functionals $\de_x$), hence $u_m=0$, a contradiction.
\end{proof}

\begin{proposition}
\label{prop:M}
Let $X$ be a set and $L$ a subspace of $\R^X$. Then $L$ contains an infinite-dimensional metrizable subspace if and only if there is a sequence of nonzero functions $\{u_n:n\in\om\}\subset L$ such that the family $\{\supp u_n:n\in \om\}$ is point-finite.
\end{proposition}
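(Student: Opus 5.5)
The forward direction reduces to Lemma~\ref{l:L-metr}. Suppose $L$ contains an infinite-dimensional metrizable subspace $N$. Pick a countable linearly independent set in $N$ and let $N_0\subset N$ be its linear span. Then $N_0$ is infinite-dimensional and countable-dimensional. It is metrizable, being a subspace of the metrizable space $N$.

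We may regard $N_0$ as a subspace of $\R^X$ with the topology of pointwise convergence, which is the induced topology. So Lemma~\ref{l:L-metr} applies to $N_0$ directly. It gives a Hamel basis $\{u_n:n\in\om\}$ of $N_0$ such that the family $\{\supp u_n:n\in\om\}$ is point-finite. Basis vectors are nonzero, so this is the required sequence in $L$.

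For the converse, let $\{u_n:n\in\om\}\subset L\setminus\{0\}$ have a point-finite family of supports. Put $N=\env{\{u_n:n\in\om\}}$, a countable-dimensional subspace of $\R^X$. Lemma~\ref{l:L-pfs}, applied to $N$ in place of $L$, shows that $N$ is infinite-dimensional. It remains to show that $N$ is metrizable.

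We cannot apply Lemma~\ref{l:L-metr} directly, since the $u_n$ need not be linearly independent. Instead we thin them to a Hamel basis of $N$ that is a subfamily of $(u_n)$. Inductively keep $u_n$ exactly when it is not in the span of the previously kept vectors. The kept vectors are linearly independent and span $N$, so they form a Hamel basis of $N$. This basis is infinite, because $N$ is infinite-dimensional.

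A subfamily of a point-finite indexed family is still point-finite. Reindexing the kept vectors by $\om$, the $(\larr)$ direction of Lemma~\ref{l:L-metr} then shows that $N$ is metrizable.

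The only point needing care is this extraction of a basis. It works because point-finiteness is preserved under passing to subfamilies, and because Lemma~\ref{l:L-pfs} guarantees the extracted basis is infinite.
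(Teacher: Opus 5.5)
Your proof is correct and follows essentially the same route as the paper: for the forward direction you pass to a countable-dimensional infinite-dimensional subspace and apply Lemma~\ref{l:L-metr}, and for the converse you use Lemma~\ref{l:L-pfs} on the span and then extract an infinite linearly independent subfamily to which Lemma~\ref{l:L-metr} applies. The only cosmetic difference is that the paper takes any infinite independent subfamily $\{u_n:n\in N\}$ and works with its own span, whereas you extract a Hamel basis of the full span $\env{\{u_n:n\in\om\}}$; either choice works.
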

\begin{proof}
$(\rarr)$ Let $S\subset L$ be an infinite-dimensional metrizable subspace. Take a countable-dimensional infinite-dimensional subspace $S_0\subset S$; it is metrizable. By Lemma~\ref{l:L-metr}, $S_0$ has a Hamel basis $\{u_n:n\in\om\}$ with a point-finite family of supports; $u_n\neq 0$.

$(\larr)$ By Lemma~\ref{l:L-pfs}, the space $\env{\{u_n:n\in\om\}}$ is infinite-dimensional. For some infinite $N\subset\om$ the family $\{u_n:n\in N\}$ is linearly independent. The family $\{\supp u_n:n\in N\}$ is point-finite, so by Lemma~\ref{l:L-metr} the countable-dimensional infinite-dimensional space $\env{\{u_n:n\in N\}}$ is metrizable.
\end{proof}

\begin{proposition}
\label{prop:FU}
Let $(X,\cS)$ be a measurable space, $\mu$ a nonzero $\si$-finite measure on $(X,\cS)$, and $L$ a Fr\'echet--Urysohn subspace of $\R^X$. If every nonzero function $f$ in $L$ is $\mu$-measurable and $\mu(\zz f)=0$, then $L$ is finite-dimensional.
\end{proposition}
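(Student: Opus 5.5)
Suppose, towards a contradiction, that $L$ is infinite-dimensional. By Proposition~\ref{prop:pre:eq-me} we may replace $\mu$ by an equivalent probability measure, so we assume $\mu(X)=1$; measurability and the null-zero-set hypothesis are unaffected. The plan is to build a set $A\subset L\setminus\{0\}$ that has $0$ in its closure, yet no sequence in $A$ converges to $0$ pointwise. This contradicts the Fr\'echet--Urysohn property.

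The first step is to choose $A$. Call $f\in L$ \emph{$\mu$-small} if $\mu(\{x:|f(x)|\ge 1\})<1/2$, and let
\[
A=\{f\in L: \mu(\{x:|f(x)|<1\})\le 1/2\},
\]
the set of functions that are \emph{large on at least half of $X$}. We then check that no sequence in $A$ converges to $0$. If $f_n\to 0$ pointwise, then every $x$ eventually satisfies $|f_n(x)|<1$, so $X=\liminf_n\{|f_n|<1\}$. Proposition~\ref{prop:pre:liminf-me} gives $1\le\liminf_n\mu(\{|f_n|<1\})$. This is impossible if $\mu(\{|f_n|<1\})\le 1/2$ for all $n$. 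So $0\notin$ the sequential closure of $A$.

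The main step is to show $0\in\cl A$. Take a basic neighbourhood $W(M,\ep)$ with $M=\{x_1,\dots,x_k\}$ finite. Since $L$ is infinite-dimensional, the subspace $S=\{f\in L: f|_M=0\}$ has finite codimension, so it contains a nonzero $g$. By hypothesis $\mu(\zz g)=0$, so $|g|>0$ $\mu$-a.e. Hence $\mu(\{|tg|<1\})=\mu(\{|g|<1/t\})\to\mu(\zz g)=0$ as $t\to\infty$, by continuity of the finite measure $\mu$ along the decreasing sets $\{|g|<1/t\}$. Choosing $t$ large gives $tg\in A$. Also $tg|_M=0$, so $tg\in W(M,\ep)$, and therefore $0\in\cl A$. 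Note that $0\notin A$, since $\mu(\{|0|<1\})=1$.

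Combining the two steps, $0\in\cl A$ but no sequence from $A$ converges to $0$, contradicting that $L$ is Fr\'echet--Urysohn. Hence $L$ is finite-dimensional. The only delicate point is the choice of $A$, which must be invariant enough to catch scaled functions vanishing on an arbitrary finite set, yet defined by a measure condition that is stable under pointwise limits via Proposition~\ref{prop:pre:liminf-me}. The threshold $1/2$ is arbitrary; any constant strictly below $\mu(X)=1$ works.
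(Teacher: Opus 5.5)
Your proof is correct and is essentially the paper's argument. The paper uses a chosen family $Q=\{g_F\}$, with one rescaled nonzero $g_F$ vanishing on each finite $F$ and satisfying $\mu(\{|g_F|<1\})<1/2$, while you take the intrinsic set $A=\{f\in L:\mu(\{|f|<1\})\le 1/2\}$. Apart from that cosmetic difference, the closure step and the use of Proposition~\ref{prop:pre:liminf-me} to rule out convergent sequences are the same (your unused definition of ``$\mu$-small'' can simply be deleted).
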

\begin{proof}
Since every $\si$-finite measure is equivalent to a probability measure (Proposition~\ref{prop:pre:eq-me}), we may assume that $\mu$ is a probability measure.
We argue by contradiction and suppose that $L$ is infinite-dimensional. 
For a finite $F\subset X$ denote
\[
\pi_F\colon L \to \R^F,\ f\mapsto f|_{F}.
\]
Since $L$ is infinite-dimensional, $\R^F$ is finite-dimensional and $\pi_F$ is linear, the kernel $\ker \pi_F$ is nontrivial.
Choose $f_F \in \ker{\pi_F}\backslash\{0\}$.
By continuity of measure we have
\[
     \lim_{\varepsilon \to 0+} \mu(\{x \in X : |f_F(x)| < \varepsilon\}) = \mu(\zz{f_F}) = 0.
\]
Choose $\varepsilon_F > 0$ such that
\[
	\mu(\{x \in X : |f_F(x)| < \varepsilon_F\})< \frac{1}{2}.
\]
Put 
\[
g_F=\frac{f_F}{\ep_F}\text{ and }U_F=\{x\in X: |g_F(x)|<1\} = g_F^{-1}((-1,1)).
\]
Then
\[
g_F |_{F} \equiv 0 \text{ and } \mu(U_F)< \frac 12.
\]
Put 
\[
Q = \{ g_F : \text{$F \subset X$ a finite nonempty subset}\}. 
\]

We show that there is no sequence $\{q_n\}_{n \in \N} \subset Q$ with $q_n \to 0$. Suppose the contrary: some sequence $\{q_n\}_{n \in \N} \subset Q$ converges to zero. 
For each $n \in \N$ there is a nonempty finite $F_n\subset X$ such that $q_n=g_{F_n}$.
Then $\liminf_{n\to\infty} U_{F_n} = X$. Since $\mu(U_{F_n})< \frac 12$ for $n\in\N$, Proposition~\ref{prop:pre:liminf-me} gives
\[
1=\mu(X)\leq \liminf_{n\to\infty} \mu\bigl( U_{F_n} \bigr) \leq \frac 12.
\]
A contradiction.

Since $g_F\in W(F,\ep)$ for any finite nonempty $F\subset X$ and $\ep>0$, we have $0\in\cl Q$.
Since there is no sequence $\{q_n\}_{n \in \N} \subset Q$ with $q_n \to 0$, the space $L$ is not Fr\'echet--Urysohn.
\end{proof}

A sequence of subsets $(M_n)_n$ of a set $X$ is point-finite if and only if 
$X=\liminf_{n\to \infty} X\setminus M_n$. Therefore Proposition~\ref{prop:M} implies the following statement.

\begin{theorem}
\label{th:M}
Let $X$ be a set and $L$ a subspace of $\R^X$. 
Then $L\in \dd_M$ if and only if $\liminf_{n\to\infty} \zz{f_n}\neq X$ for every sequence of nonzero functions from $L$.
\end{theorem}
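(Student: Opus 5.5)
The plan is to derive Theorem~\ref{th:M} directly from Proposition~\ref{prop:M}, using the set-theoretic reformulation stated just before the theorem. The key observation is that for a sequence $(f_n)_n$ of functions in $\R^X$ we have $X\setminus\supp f_n=\zz{f_n}$. So the family $\{\supp f_n:n\in\om\}$ is point-finite if and only if $\liminf_{n\to\infty}\zz{f_n}=X$.

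First I will verify this equivalence for an arbitrary sequence of sets $(M_n)_n$. The family $(M_n)_n$ is point-finite precisely when each $x\in X$ lies in only finitely many $M_n$. This holds precisely when each $x$ has some $m$ with $x\in X\setminus M_n$ for all $n\ge m$. By the definition of $\liminf$ of sets, that says $x\in\bigcup_m\bigcap_{n\ge m}(X\setminus M_n)$. Repetitions cause no trouble, since point-finiteness is counted over indices, exactly as the tail condition is.

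Next I will unwind the definition of $\dd_M$. By definition, $L\in\dd_M$ if and only if every metrizable subspace of $L$ is finite-dimensional, that is, if and only if $L$ contains no infinite-dimensional metrizable subspace. By Proposition~\ref{prop:M}, this holds if and only if there is no sequence of nonzero $u_n\in L$ with $\{\supp u_n\}$ point-finite. By the equivalence above, this is the same as requiring $\liminf_n\zz{f_n}\ne X$ for every sequence of nonzero $f_n\in L$.

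There is no real obstacle here. The only point needing care is the bookkeeping: the theorem's quantifier over "every sequence of nonzero functions" is the contrapositive of the existential statement in Proposition~\ref{prop:M}, and the complement identity $X\setminus\supp f=\zz f$ must be applied termwise.
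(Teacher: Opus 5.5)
Your proposal is correct and follows the paper's own derivation: the paper also obtains Theorem~\ref{th:M} directly from Proposition~\ref{prop:M}, using the observation that $(M_n)_n$ is point-finite if and only if $X=\liminf_{n\to\infty} X\setminus M_n$, applied with $M_n=\supp f_n$ so that $X\setminus M_n=\zz{f_n}$. Your explicit check of this set-theoretic equivalence and of the quantifier bookkeeping simply spells out the paper's one-line argument.
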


Proposition~\ref{prop:FU} implies the following statement.

\begin{theorem}
\label{th:FU}
Let $X$ be a measurable space, $\mu$ a nonzero $\si$-finite measure on $X$, 
and $L\subset \R^X$ a subspace of the space $\cL^0(X,\mu)$ of $\mu$-measurable functions with the topology of pointwise convergence. If $\mu(\zz f)=0$ for every nonzero function $f$ in $L$, then $L\in\dd_{FU}$.
\end{theorem}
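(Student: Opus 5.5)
Theorem~\ref{th:FU} is a direct reformulation of Proposition~\ref{prop:FU} in terms of the class $\dd_{FU}$. By definition, $L\in\dd_{FU}$ means that every Fr\'echet--Urysohn subspace $S\subset L$ is finite-dimensional. So the plan is to take an arbitrary Fr\'echet--Urysohn subspace $S$ of $L$ and check that $S$ satisfies the hypotheses of Proposition~\ref{prop:FU}.

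First, $S$ is a linear subset of $L\subset\R^X$. Its topology, induced from $L$, coincides with the topology of pointwise convergence inherited from $\R^X$, so $S$ is a Fr\'echet--Urysohn subspace of $\R^X$. Second, every nonzero $f\in S$ lies in $L\subset\cL^0(X,\mu)$. Hence $f$ is $\mu$-measurable, and by the hypothesis of the theorem $\mu(\zz f)=0$. Third, $\mu$ is a nonzero $\si$-finite measure on $(X,\cS)$, exactly as Proposition~\ref{prop:FU} requires.

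Applying Proposition~\ref{prop:FU} to $S$ in place of $L$ shows that $S$ is finite-dimensional. Since $S$ was an arbitrary Fr\'echet--Urysohn subspace of $L$, we conclude $L\in\dd_{FU}$.

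There is no real obstacle. The only point to check is that passing to a subspace preserves both the Fr\'echet--Urysohn property (this is how $S$ is chosen) and the pointwise-convergence topology. The measure-theoretic hypotheses are inherited by $S$ trivially, because they are stated elementwise.
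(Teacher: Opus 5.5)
Your proposal is correct and is essentially the paper's argument: the paper derives Theorem~\ref{th:FU} directly from Proposition~\ref{prop:FU}. You apply that proposition to an arbitrary Fr\'echet--Urysohn subspace $S\subset L$, whose induced topology is the pointwise one and whose nonzero elements inherit $\mu$-measurability and $\mu$-null zero sets, exactly as needed.
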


Let us consider an application of this theorem.

\begin{proposition}
\label{prop:a-FU}
Let $G \subset \R^n$ be a domain and let $\mathcal{A}(G)$ be the space of real-analytic functions on $G$ with the topology of pointwise convergence.
Then $\mathcal{A}(G)\in \dd_{FU}$.
\end{proposition}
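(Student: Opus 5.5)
We apply Theorem~\ref{th:FU} with $X=G$, the $\si$-algebra of Borel (or Lebesgue measurable) subsets of $G$, and $\mu$ the Lebesgue measure $\la_n$ restricted to $G$. Since $G$ is a nonempty open set, $\mu$ is nonzero. It is also $\si$-finite, because $G$ is the countable union of the bounded sets $G\cap[-k,k]^n$. Every real-analytic function on $G$ is continuous, hence Borel measurable, so $\mathcal A(G)$ is a linear subspace of $\cL^0(G,\mu)$. It carries the topology of pointwise convergence, i.e.\ the topology induced from $\R^G$, exactly as Theorem~\ref{th:FU} requires. So everything reduces to the following claim.

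\emph{Claim.} If $f\in\mathcal A(G)$ and $f\neq 0$, then $\la_n(\zz f)=0$.

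I would prove this by induction on $n$. For $n=1$, $G$ is an open interval, and the zeros of a nonzero analytic function on a connected open set have no accumulation point in $G$ (identity theorem). Hence $\zz f$ is at most countable and has measure zero.

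For the inductive step, suppose $\la_n(\zz f)>0$. By the Lebesgue density theorem (or simply by Fubini), there is a point $a\in G$ with the following property: for every neighbourhood of $a$, $\zz f$ meets it in positive measure. Take a small cube $C=I\times J\subset G$ around $a$, with $I\subset\R^{n-1}$ and $J\subset\R$ open intervals and boxes, such that $\la_n(\zz f\cap C)>0$. By Fubini, the set $A$ of $x'\in I$ for which the slice $\{t\in J: f(x',t)=0\}$ has positive one-dimensional measure satisfies $\la_{n-1}(A)>0$. For each $x'\in A$, the function $t\mapsto f(x',t)$ is analytic on $J$ and, by the case $n=1$, vanishes identically on $J$. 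Consequently every $t$-derivative vanishes there: $\partial_t^k f(x',t_0)=0$ for all $k$, all $x'\in A$, and a fixed $t_0\in J$.

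Each function $g_k(x')=\partial_t^k f(x',t_0)$ is real-analytic on the connected box $I$ and vanishes on a set of positive $(n-1)$-dimensional measure. By the induction hypothesis, $g_k\equiv 0$ on $I$. Therefore all partial derivatives of $f$ at the point $(x',t_0)$ vanish, where $x'\in I$ is arbitrary. Writing $\partial^\beta=\partial_{x'}^{\beta'}\partial_t^k$, each such derivative is a derivative of $g_k$, and $g_k\equiv 0$. Hence the Taylor series of $f$ at that point is zero, so $f$ vanishes on a neighbourhood of it. Since $G$ is connected, the identity theorem gives $f\equiv 0$ on $G$, a contradiction. This proves the claim.

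With the claim in hand, Theorem~\ref{th:FU} gives $\mathcal A(G)\in\dd_{FU}$.

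The main step is the claim. The point that needs care is the induction: the derivatives $g_k$ must be restricted to a connected domain (a box), so that the induction hypothesis applies to them. Alternatively, one could cite the standard fact that the zero set of a nonzero real-analytic function on a connected open set is a Lebesgue-null set.
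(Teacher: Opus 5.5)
Your proposal is correct and follows the paper's proof: apply Theorem~\ref{th:FU} to Lebesgue measure on $G$, using the fact that the zero set of a nonzero real-analytic function on a domain is Lebesgue-null. The only difference is that the paper cites this fact (Proposition~1 of Mityagin's note), whereas you prove it yourself by induction on $n$ using Fubini and the identity theorem, and that induction is sound.
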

\begin{proof}
By Proposition~1 of \cite{Mityagin2020}, the zero set of any nonzero function in $\mathcal{A}(G)$ has Lebesgue measure zero.
By Theorem~\ref{th:FU}, $\mathcal{A}(G)\in \dd_{FU}$.
\end{proof}

Let $X$ be a set, $L$ a subspace of $\R^X$ and $\fZ=\{\zz f: f\in L, f\neq 0\}$. Consider the following conditions on $L$:
\begin{enumerate}
\item[$(\cZ_1)$]
$\liminf_{n\to\infty} Z_n\neq X$ for every sequence $\{Z_n\in\fZ:n\in\N\}$;
\item[$(\cZ_2)$]
there is a nonzero $\si$-additive finite measure $\mu$ on some $\si$-algebra of subsets of $X$ such that $\mu(Z)=0$ for all $Z\in \fZ$ and every function from $L$ is $\mu$-measurable.
\end{enumerate}

Clearly, $(\cZ_2) \implies (\cZ_1)$.
Indeed, let $\mu$ be a measure from $(\cZ_2)$ and $Z_n\in\fZ$. Then $\bigcap_{n\ge k}Z_n\subset Z_k$ are $\mu$-null measurable sets, hence $\mu(\liminf_n Z_n)=0<\mu(X)$ and $\liminf_n Z_n\neq X$.

Theorems~\ref{th:M} and~\ref{th:FU} can be reformulated as follows.

\begin{proposition}
\label{prop:M-FU}
Let $X$ be a set and $L$ a subspace of $\R^X$. Then
\begin{enumerate}
\item[$\mathrm{M}:$]
$(\cZ_1) \lrarr L\in\dd_M$;
\item[$\mathrm{FU}:$]
$(\cZ_2) \rarr L\in\dd_{FU}$.
\end{enumerate}
\end{proposition}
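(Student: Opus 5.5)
Both parts reduce to Theorems~\ref{th:M} and~\ref{th:FU}, so the plan is to match the conditions $(\cZ_1)$ and $(\cZ_2)$ with the hypotheses of those theorems.

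For part M, we take a sequence of nonzero $f_n\in L$ and note that $\zz{f_n}\in\fZ$. Conversely, every sequence $(Z_n)$ in $\fZ$ has the form $Z_n=\zz{f_n}$ for some nonzero $f_n\in L$. Hence $(\cZ_1)$ is, word for word, the condition in Theorem~\ref{th:M}: $\liminf_n \zz{f_n}\neq X$ for every sequence of nonzero functions in $L$. Theorem~\ref{th:M} then gives $(\cZ_1)\lrarr L\in\dd_M$ immediately. If one wants to go back to Proposition~\ref{prop:M}, the only fact to check is the identity
\[
X\setminus\liminf_n Z_n=\limsup_n (X\setminus Z_n)=\limsup_n \supp f_n .
\]
From it, $\liminf_n Z_n=X$ exactly when $\{\supp f_n\}$ is point-finite.

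For part FU, we assume $(\cZ_2)$ and take $\mu$ as given there, on a $\si$-algebra $\cS$. Since $\mu$ is finite and nonzero, it is $\si$-finite and nonzero. Every function of $L$ is $\mu$-measurable, so $L\subset\cL^0(X,\mu)$, and $L$ carries the pointwise topology, being a subspace of $\R^X$. For nonzero $f\in L$ we have $\zz f\in\fZ$, so $\mu(\zz f)=0$. Theorem~\ref{th:FU} then yields $L\in\dd_{FU}$.

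There is no substantive obstacle; the proof is bookkeeping. The one point needing care is that Theorem~\ref{th:FU} and Proposition~\ref{prop:FU} are applied to every Fr\'echet--Urysohn subspace $L_0\subset L$. This is legitimate because the hypotheses restrict to subspaces: nonzero elements of $L_0$ are nonzero elements of $L$, so they remain measurable with null zero sets.
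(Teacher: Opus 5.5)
Your proposal is correct and matches the paper, which states Proposition~\ref{prop:M-FU} as a direct reformulation: $(\cZ_1)$ and $(\cZ_2)$ are exactly the hypotheses of Theorems~\ref{th:M} and~\ref{th:FU}, the latter obtained from Proposition~\ref{prop:FU} applied to each Fr\'echet--Urysohn subspace. Your $\liminf$/point-finite identity is the same remark the paper uses to derive Theorem~\ref{th:M} from Proposition~\ref{prop:M}.
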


\begin{question}\label{q:FU}
Find a necessary and sufficient condition on a subspace $L\subset\R^X$ for $L\in\dd_{FU}$ in $\mathrm{ZFC}$.
\end{question}
A criterion for $\dd_M$ is given by Theorem~\ref{th:M}; for $\dd_{FU}$ only the sufficient condition $(\mathcal Z_2)$ is known in $\mathrm{ZFC}$; in the Hru\v{s}\'ak--Ramos-Garc\'{\i}a model the criterion is $(\mathcal Z_1)$ (see Remark~\ref{note:Z3}).

\begin{proposition}\label{prop:fin-mult}
Let $\sB$ be a class of TVS which is closed under isomorphisms and hereditary with respect to closed subspaces (a closed subspace of any TVS from $\sB$ belongs to $\sB$). If $L\in\dd_\sB$, then $L\times\R\in\dd_\sB$.
\end{proposition}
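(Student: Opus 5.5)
Let $S\subset L\times\R$ be a subspace belonging to $\sB$; I will show that $S$ is finite-dimensional. The idea is to cut $S$ down to a closed subspace of finite codimension that is isomorphic to a subspace of $L$. Let $p\colon L\times\R\to\R$ be the continuous linear projection onto the second factor. Put $S_0=S\cap(L\times\{0\})=\ker(p|_S)$. Since $p|_S$ is continuous, $S_0$ is a closed subspace of $S$, so $S_0\in\sB$ because $\sB$ is hereditary with respect to closed subspaces.

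Next I compare $S_0$ with a subspace of $L$. The map $j\colon L\to L\times\{0\}$, $x\mapsto(x,0)$, is a topological linear isomorphism onto $L\times\{0\}$, since $L\times\{0\}$ carries the subspace topology of the product. Hence $j^{-1}(S_0)$ is a subspace of $L$ that is isomorphic, as a TVS, to $S_0$. Because $\sB$ is closed under isomorphisms, $j^{-1}(S_0)\in\sB$. Since $L\in\dd_\sB$, the subspace $j^{-1}(S_0)$ is finite-dimensional, and therefore $S_0$ is finite-dimensional.

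Finally, $S_0$ is the kernel of the linear functional $p|_S$ on $S$, so $\dim S\le\dim S_0+1<\infty$. As $S$ was an arbitrary subspace of $L\times\R$ in $\sB$, this gives $L\times\R\in\dd_\sB$.

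There is no serious obstacle here. The only point that needs attention is that hereditariness is assumed only for closed subspaces, so one must check that $S_0$ is closed in $S$. This holds because $S_0$ is the kernel of a continuous functional.
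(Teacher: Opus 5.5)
Your proof is correct and follows the paper's argument essentially step for step. You restrict the projection onto $\R$ to $S$, note that its kernel is closed in $S$ and hence lies in $\sB$, identify that kernel with a subspace of $L$ so that it is finite-dimensional, and conclude that $S$ has dimension at most one more.
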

\begin{proof}
Let $S\subset L\times\R$ be a subspace with $S\in\sB$, and let $\pi\colon S\to\R$ be the projection onto the second factor. Since $\{0\}$ is closed in $\R$, the subspace $S_1=\ker\pi=S\cap(L\times\{0\})$ is closed in $S$, hence $S_1\in\sB$. But $S_1$ is isomorphic to a subspace of $L$, hence $S_1$ is finite-dimensional. Since $S/S_1$ is linearly isomorphic to the subspace $\pi(S)\subset\R$, we get $\dim S\le\dim S_1+1<\infty$.
\end{proof}

The classes of metrizable TVS and of Fr\'echet--Urysohn TVS are closed under isomorphisms and are hereditary with respect to subspaces, in particular closed ones. Hence the classes $\dd_M$ and $\dd_{FU}$ are stable under multiplication by $\R$: if $L\in\dd_M$ (respectively, $L\in\dd_{FU}$), then $L\times\R\in\dd_M$ (respectively, $L\times\R\in\dd_{FU}$).

\begin{note}\label{note:plus}
Let $L\subset\R^X$ be an infinite-dimensional subspace belonging to $\dd_{FU}$. Such subspaces exist: for example, let $D$ be a countable discrete space, $X=C_p(D)=\R^D$ and $L=L_p(D)=(C_p(D))'\subset\R^X$; it follows from \cite[Theorem~7.10(i)]{LRS-TVS} that $L\in\dd_{FU}$ (see also Remark~\ref{note:Lp}). Let $x_*\notin X$, $X^+=X\cup\{x_*\}$ and $L^+=\{f\in\R^{X^+}: f|_X\in L\}$. The map $f\mapsto(f|_X,f(x_*))$ is a linear homeomorphism of $L^+$ onto $L\times\R$ (the topology of pointwise convergence on both sides); in particular, $L^+$ is infinite-dimensional. By Proposition~\ref{prop:fin-mult}, $L^+\in\dd_{FU}$. At the same time, condition $(\cZ_2)$ fails for $L^+$. Indeed, let $g\in L\setminus\{0\}$, and let $f_1,f_2\in L^+$ be given by $f_1|_X\equiv0$, $f_1(x_*)=1$ and $f_2|_X=g$, $f_2(x_*)=0$. Then $\zz{f_1}=X$, $\zz{f_2}=\zz g\cup\{x_*\}$, and $\zz{f_1}\cup\zz{f_2}=X^+$. If $\mu$ were a measure from $(\cZ_2)$ for $L^+$, then $\mu(X^+)\le\mu(\zz{f_1})+\mu(\zz{f_2})=0$, a contradiction.

It follows, first, that $(\cZ_2)$ does not follow from $(\cZ_1)$ ($L^+\in\dd_{FU}\subset\dd_M$, so $(\cZ_1)$ holds for $L^+$ by Theorem~\ref{th:M}). Second, condition $(\cZ_2)$ is not necessary for $L^+\in\dd_{FU}$. Hence $(\cZ_2)$ cannot be an answer to Question~\ref{q:FU}.
\end{note}

\section{The space of polynomials}\label{sec:pol}

Let $X\subset \R$ be infinite.
Denote by $\cP_X$ the space of polynomial functions on $X$ (restrictions to $X$ of polynomials $\R[t]$) with the topology of pointwise convergence, in other words
\[
\cP_X = \{ f|_X : f\in \R[t]\}\subset \R^X.
\]

Since the zero set of any nonzero element of $\cP_X$ is finite, the $\liminf$ of a countable family of such zero sets is countable and hence different from an uncountable $X$; that is, $(\mathcal Z_1)$ holds. Therefore Proposition~\ref{prop:M-FU} implies the following statement.

\begin{proposition}
\label{prop:pol:M}
If $X\subset\R$ is uncountable, then $\cP_X\in\dd_M$.
\end{proposition}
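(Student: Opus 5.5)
The plan is to verify condition $(\cZ_1)$ for $L=\cP_X$ and then apply part M of Proposition~\ref{prop:M-FU}, which says $(\cZ_1)\lrarr L\in\dd_M$. Equivalently, by Theorem~\ref{th:M}, it suffices to show that $\liminf_{n\to\infty}\zz{f_n}\neq X$ for every sequence $(f_n)_{n\in\N}$ of nonzero elements of $\cP_X$.

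The first step is to show that each zero set is finite. If $f\in\cP_X$ is nonzero, then $f=p|_X$ for some $p\in\R[t]$, and $p$ is not the zero polynomial (otherwise $f=0$). Hence $p$ has at most $\deg p$ real roots, so $\zz f=\zz p\cap X$ is finite. This is the only step with any content. One subtlety deserves mention: a polynomial vanishing on the infinite set $X$ must be the zero polynomial, so $f\mapsto p$ is well defined. This identification is not actually needed here, since only $p\neq 0$ is used.

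The second step is a counting argument. Fix nonzero $f_n\in\cP_X$ and set $Z_n=\zz{f_n}$. Then
\[
\liminf_{n\to\infty} Z_n=\bigcup_{m}\bigcap_{n\ge m} Z_n\subset\bigcup_m Z_m ,
\]
and the right-hand side is a countable union of finite sets, hence at most countable. Since $X$ is uncountable, $\liminf_n Z_n\neq X$. This gives $(\cZ_1)$, and Proposition~\ref{prop:M-FU} (M) yields $\cP_X\in\dd_M$.

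No step is a real obstacle. The only points to check are that nonzero elements of $\cP_X$ come from nonzero polynomials, and that the hypothesis of uncountability of $X$ is exactly what the countability bound needs.
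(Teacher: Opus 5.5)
Your proposal is correct and matches the paper's argument: zero sets of nonzero elements of $\cP_X$ are finite, so the $\liminf$ of any sequence of them is at most countable and cannot equal the uncountable $X$. This gives $(\cZ_1)$, and part M of Proposition~\ref{prop:M-FU} then yields $\cP_X\in\dd_M$.
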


Proposition~\ref{prop:a-FU} implies $\cP_{\R}\in \dd_{FU}$. Below we refine this fact.

\begin{proposition}
\label{prop:pol:Z3}
Let $X\subset\R$ be infinite. Condition $(\mathcal Z_2)$ for $\cP_X$ holds if and only if $X$ is not of absolute measure zero.
\end{proposition}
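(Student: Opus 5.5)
Both directions reduce to one fact: the set $\fZ$ of zero sets of nonzero elements of $\cP_X$ consists exactly of the finite subsets of $X$ with at most $\deg$ many points. Indeed, every finite $F\subset X$ is the zero set of $\prod_{a\in F}(t-a)$, and conversely a nonzero polynomial has finitely many roots. So condition $(\cZ_2)$ for $\cP_X$ says: there is a nonzero finite $\si$-additive measure $\mu$ on some $\si$-algebra $\cS$ of subsets of $X$ such that $\mu(\{x\})=0$ for all $x\in X$ and every polynomial restricted to $X$ is $\mu$-measurable. Since singletons are zero sets, they lie in $\cS$ and have measure zero.

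($\larr$) Suppose $X$ is not of absolute measure zero. By the equivalent form of the definition recalled in Section~\ref{sec:pre} (\cite[Theorem~8.2]{Bukovsky}), there is a nonzero continuous finite Borel measure on $X$, that is, a measure $\nu$ on the Borel $\si$-algebra of the subspace $X\subset\R$ with $\nu(\{x\})=0$ for all $x$. If one prefers to start from the outer-measure form ($\mu^*(X)>0$ for some atomless finite Borel $\mu$ on $\R$), take $\nu(B\cap X)=\mu^*(B\cap X)$ for Borel $B\subset\R$. This is the trace measure, which is countably additive on the trace $\si$-algebra by the standard measurable-hull argument. Every polynomial is continuous, so its restriction to $X$ is Borel measurable on $X$. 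Every zero set is finite, hence has $\nu$-measure $0$ because $\nu$ is atomless. Thus $(\cZ_2)$ holds.

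($\rarr$) Suppose $(\cZ_2)$ holds with a measure $\mu$ on a $\si$-algebra $\cS$. Each polynomial $p|_X$ is $\cS$-measurable. In particular the identity $t|_X$ is measurable, so $X\cap B\in\cS$ for every Borel $B\subset\R$, because $X\cap B=(t|_X)^{-1}(B)$. Define $\la(B)=\mu(X\cap B)$ for Borel $B\subset\R$. This is a finite Borel measure on $\R$, it is nonzero since $\la(\R)=\mu(X)>0$, and it is atomless: $\la(\{a\})=0$ for $a\notin X$ trivially, and for $a\in X$ because $\{a\}\in\fZ$. Any Borel $B\supset X$ has $\la(B)=\mu(X)>0$, so $\la^*(X)=\mu(X)>0$. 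Hence $X$ is not of absolute measure zero.

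The only delicate point is the measure-theoretic translation between "a continuous measure on $X$" and "$\mu^*(X)>0$ for an atomless Borel measure on $\R$". In the direction ($\rarr$) this is avoided by pushing forward via the measurable identity map. In the direction ($\larr$) it is handled either by citing \cite[Theorem~8.2]{Bukovsky} directly or by the trace-measure construction. Everything else is immediate from finiteness of zero sets.
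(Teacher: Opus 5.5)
Your proof is correct and is essentially the paper's argument: finiteness of zero sets plus continuity of polynomials gives ($\Leftarrow$), and measurability of $t\mapsto t$ together with $\{x\}$ being the zero set of $t-x$ (using that $X$ is infinite) gives ($\Rightarrow$). The only cosmetic difference is in ($\Rightarrow$). You push $\mu$ forward to an atomless Borel measure $\lambda$ on $\R$ and check $\lambda^*(X)>0$ directly. The paper instead restricts $\mu$ to the Borel $\sigma$-algebra of $X$ and invokes the equivalent formulation via continuous Borel measures on $X$.
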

\begin{proof}
$(\Leftarrow)$ If $X$ is not of absolute measure zero, then there is a nonzero continuous finite Borel measure $\mu$ on $X$. The zero sets of nonzero polynomials are finite, hence $\mu$-null; polynomials on $X$ are continuous, hence Borel and $\mu$-measurable. Therefore $(\mathcal Z_2)$ holds.

$(\Rightarrow)$ Let $\mu$ be a nonzero measure from condition $(\mathcal Z_2)$, defined on some $\si$-algebra $\Sigma$. The function $t\mapsto t$ on $X$ is $\Sigma$-measurable, and the Borel $\si$-algebra of $X$ is generated by this function, hence it is contained in $\Sigma$. Since $X$ is infinite, $t-x$ is a nonzero element of $\cP_X$, and its zero set is $\{x\}$ for $x\in X$; hence $\mu(\{x\})=0$. Thus the restriction of $\mu$ to the $\si$-algebra of Borel sets is a nonzero continuous Borel measure on $X$, that is, $X$ is not of absolute measure zero.
\end{proof}

\begin{proposition}
\label{prop:pol:FU}
If $X\subset\R$ is not of absolute measure zero, then $\cP_X\in\dd_{FU}$.
\end{proposition}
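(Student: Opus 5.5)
This is a direct combination of Proposition~\ref{prop:pol:Z3} with Proposition~\ref{prop:M-FU} (equivalently, Theorem~\ref{th:FU}).

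First, note that $X$ is infinite. A finite set is of absolute measure zero, since every continuous measure vanishes on finite sets. So Proposition~\ref{prop:pol:Z3} applies to $\cP_X$.

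Since $X$ is not of absolute measure zero, Proposition~\ref{prop:pol:Z3} shows that condition $(\cZ_2)$ holds for $\cP_X$. Part FU of Proposition~\ref{prop:M-FU} then gives $\cP_X\in\dd_{FU}$.

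To make this self-contained at the level of Theorem~\ref{th:FU}, I would also spell out the underlying argument. Take a nonzero continuous finite Borel measure $\mu$ on $X$.

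\begin{itemize}
\item Every element of $\cP_X$ is continuous on $X$, hence Borel and $\mu$-measurable.
\item A nonzero element of $\cP_X$ is the restriction of a nonzero polynomial, because $X$ is infinite. Its zero set is therefore finite, and so $\mu$-null because $\mu$ is continuous.
\end{itemize}

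Since $\mu$ is finite and nonzero, it is $\si$-finite and nonzero. Theorem~\ref{th:FU} then applies directly, with $L=\cP_X\subset\cL^0(X,\mu)$.

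There is no real obstacle here; the statement is a corollary. The only points needing care are the ones above: $X$ is infinite (so nonzero functions in $\cP_X$ have finite zero sets), and the measure is continuous (so finite sets are null).
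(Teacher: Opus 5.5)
Your proposal is correct and follows the paper's proof exactly: Proposition~\ref{prop:pol:Z3} gives $(\cZ_2)$ for $\cP_X$, and part FU of Proposition~\ref{prop:M-FU} then yields $\cP_X\in\dd_{FU}$. Your explicit check that $X$ must be infinite (finite sets have absolute measure zero), which the paper leaves to the standing assumption of Section~\ref{sec:pol}, is a worthwhile addition.
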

\begin{proof}
By Proposition~\ref{prop:pol:Z3}, condition $(\mathcal Z_2)$ holds for $\cP_X$. Proposition~\ref{prop:M-FU} (part FU) implies that $\cP_X\in\dd_{FU}$.
\end{proof}

\begin{cor}
\label{cor:pol:FU}
Let $X\subset\R$ and suppose that $\cP_X$ contains an infinite-dimensional Fr\'echet--Urysohn space.
Then $X$ is of absolute measure zero. 
\end{cor}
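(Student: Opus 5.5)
This is the contrapositive of Proposition~\ref{prop:pol:FU}, so the plan is short. Suppose $\cP_X$ contains an infinite-dimensional Fr\'echet--Urysohn subspace $S$ (a linear subspace with the induced topology). Assume, towards a contradiction, that $X$ is not of absolute measure zero. Then Proposition~\ref{prop:pol:FU} gives $\cP_X\in\dd_{FU}$. By the definition of $\dd_{FU}$, every Fr\'echet--Urysohn subspace of $\cP_X$ is then finite-dimensional. This contradicts the choice of $S$.

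One side point needs attention. The statement assumes only $X\subset\R$, whereas Proposition~\ref{prop:pol:FU} and the construction of $\cP_X$ were set up for infinite $X$. If $X$ is finite, then $\cP_X\subset\R^X$ is finite-dimensional, so it contains no infinite-dimensional subspace and the hypothesis is vacuous. Hence we may assume $X$ is infinite, and Proposition~\ref{prop:pol:FU} applies.

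If instead we want to unwind the argument, there are two steps. First, Proposition~\ref{prop:pol:Z3} converts "$X$ is not of absolute measure zero" into a nonzero continuous finite Borel measure $\mu$ on $X$; every nonzero polynomial has a finite, hence $\mu$-null, zero set. Second, Proposition~\ref{prop:FU} is applied to $S$ with this $\mu$, which forces $\dim S<\infty$. No step is a genuine obstacle: all the measure-theoretic work is already contained in Proposition~\ref{prop:FU}, so the corollary is purely a matter of logical bookkeeping.
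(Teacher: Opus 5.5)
Your proposal is correct and matches the paper, which states this corollary without proof as the immediate contrapositive of Proposition~\ref{prop:pol:FU}, itself obtained from Proposition~\ref{prop:pol:Z3} and Proposition~\ref{prop:FU}. Your remark on finite $X$ is harmless extra care: there $\cP_X$ is finite-dimensional, and a finite set is of absolute measure zero in any case.
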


By (F1) and (F2) of Section~\ref{sec:pre}, if $X\subset\R$ and $C_p(X)$ is a Fr\'echet--Urysohn space, then $X$ is of absolute measure zero.
Corollary~\ref{cor:pol:FU} strengthens this statement in its hypothesis (instead of all of $C_p(X)$ it suffices to have an infinite-dimensional Fr\'echet--Urysohn subspace of $\cP_X$), but by the conclusion of (F2) a stronger statement holds: $X$ has strong measure zero.
Note also that if $X\subset\R$ and $C_p(X)$ is a $k_\R$-space, then $X$ is of absolute measure zero \cite[Theorem 11.18]{GR2025kr}.

\begin{example}
\label{ex:pol:FU}

Assume $\mathrm{MA+\lnot CH}$.
Let $X\subset \R$, $|X|=\om_1$. By (F3), $X$ has the property $(\gamma)$ and hence $C_p(X)$ is a Fr\'echet--Urysohn space, nonmetrizable since $X$ is uncountable. The subspace $\cP_X$ of the Fr\'echet--Urysohn space $C_p(X)$ is a Fr\'echet--Urysohn space. Since polynomials interpolate functions on finite sets, the functionals $\de_x$, $x\in X$, are linearly independent in $\cP_X'$, and $\cP_X'$ is uncountable-dimensional; by Lemma~\ref{l:L-metr-1} the space $\cP_X$ is nonmetrizable. Proposition~\ref{prop:pol:M} implies that $\cP_X\in\dd_M$.

The topology of $\cP_X$ is weak: by Lemma~\ref{l:L-dual} the space $\cP_X'$ is generated by the functionals $\de_x$, $x\in X$, hence the topology of pointwise convergence coincides with the weak topology $\sigma(\cP_X,\cP_X')$. Consequently, under $\mathrm{MA+\lnot CH}$, for the LCS $V=\cP_X$ we have $V=V_w$ and $V_w\in\dd_M\setminus\dd_{FU}$: the equality $\dd_M=\dd_{FU}$ fails even for spaces with the weak topology.
\end{example}

\begin{note}\label{note:Z3}
\textbf{(On the limits of applicability of Theorem~\ref{th:FU}.)}
For $\cP_X$, $X\subset\R$, condition $(\mathcal Z_2)$ is equivalent to $X$ not being of absolute measure zero (Proposition~\ref{prop:pol:Z3}). Two conclusions follow.
\begin{enumerate}[leftmargin=0pt,itemindent=2em,labelsep=0.5em]
\item In the Hru\v{s}\'ak--Ramos-Garc\'{\i}a model $\dd_M=\dd_{FU}$ (Theorem~\ref{th:ddM-ddFU}), hence for every subspace $L\subset\R^X$ condition $(\mathcal Z_1)$ is equivalent to $L\in\dd_{FU}$ (Proposition~\ref{prop:M-FU}); that is, Theorem~\ref{th:M} gives a complete description of $\dd_{FU}$ in this model. At the same time, condition $(\mathcal Z_2)$ is not necessary for $\dd_{FU}$: in $\mathrm{ZFC}$ there is an uncountable set $X\subset\R$ of absolute measure zero \cite[Theorem~8.3]{Bukovsky}; for it $\cP_X\in\dd_M$ by Proposition~\ref{prop:pol:M}, hence $\cP_X\in\dd_{FU}$ in the Hru\v{s}\'ak--Ramos-Garc\'{\i}a model, but $(\mathcal Z_2)$ does not hold.
\item Under the assumptions of Example~\ref{ex:pol:FU}, $(\mathcal Z_1)$ holds for $\cP_X$ (zero sets of nonzero polynomials are finite and $X$ is uncountable), but $\cP_X$ is an infinite-dimensional Fr\'echet--Urysohn space, that is, $\cP_X\notin\dd_{FU}$. Therefore, under $\mathrm{MA+\lnot CH}$, condition $(\mathcal Z_2)$ in the sufficient condition of Proposition~\ref{prop:M-FU} cannot be replaced by $(\mathcal Z_1)$.
\end{enumerate}
\end{note}

\begin{question}\label{q:ex}
Is there in $\mathrm{ZFC}$ an infinite set $X\subset\R$ of absolute measure zero such that $\cP_X\in\dd_{FU}$? Is there such an $X$ under $\mathrm{MA+\lnot CH}$?
\end{question}
In the Hru\v{s}\'ak--Ramos-Garc\'{\i}a model such an $X$ exists (conclusion~1 of Remark~\ref{note:Z3}). Under $\mathrm{MA+\lnot CH}$ it must have cardinality $\mathfrak c$: infinite $X$ of smaller cardinality are either countable or $\gamma$-sets, and then $\cP_X\notin\dd_{FU}$.

\section{The classes $\dd_M$ and $\dd_{FU}$ in locally convex spaces with the weak topology}
\label{sec:wlcs}

Let $V$ be an LCS, $V_w$ the space $V$ with its weak topology and $V'_{w^*}$ the dual space with its weak${}^\ast$ topology.

For $i=1,2$ we define a class $\cW_i$ of LCS: $V\in \cW_i$ if condition $\cZ_i$ holds for $X=V'$ and $L=c(V)$. That is,
\begin{enumerate}
\item[$(\cW_1)$]
$\liminf_{n\to\infty} \zz{\cc v_n}\neq V'$ for every sequence $\{v_n\in V\setminus\{0\}:n\in\N\}$;
\item[$(\cW_2)$]
there is a nonzero $\si$-additive finite measure $\mu$ on some $\si$-algebra of subsets of $V'$ such that $\mu(\zz{\cc v})=0$ for all $v\in V\setminus\{0\}$ and every function from $c(V)$ is $\mu$-measurable.
\end{enumerate}

Proposition~\ref{prop:M-FU} implies the following statement.

\begin{proposition}
\label{prop:wlcs:M-FU}
Let $V$ be an LCS.
Then
\begin{enumerate}
\item[$\mathrm{M}:$]
$V\in \cW_1 \lrarr V_w\in\dd_M$;
\item[$\mathrm{FU}:$]
$V\in \cW_2 \rarr V_w\in\dd_{FU}$.
\end{enumerate}
\end{proposition}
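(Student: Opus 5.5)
The plan is to apply Proposition~\ref{prop:M-FU} directly with $X=V'$ and $L=c(V)\subset\R^{V'}$. The only genuine check is that the weak topology transports correctly under $c$: the map $c$ must be a linear homeomorphism of $V_w$ onto $c(V)$, where $c(V)$ carries the topology of pointwise convergence inherited from $\R^{V'}$.

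First I verify this embedding. The map $c$ is linear, and it is injective because $V'$ separates points (Hahn--Banach, as noted in Section~\ref{sec:pre}). The weak topology $\sigma(V,V')$ is by definition the initial topology for the family $\{f: f\in V'\}$. The pointwise topology on $c(V)$ is the initial topology for the evaluations $\cc v\mapsto \cc v(f)=f(v)$, $f\in V'$. These two families correspond under $c$, so $c\colon V_w\to c(V)$ is a linear homeomorphism.

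Next I note that $\dd_M$ and $\dd_{FU}$ are invariant under linear homeomorphisms. Subspaces correspond to subspaces, and metrizability, the Fr\'echet--Urysohn property and finite-dimensionality are all preserved. Hence $V_w\in\dd_M$ if and only if $c(V)\in\dd_M$, and likewise for $\dd_{FU}$.

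It remains to match the conditions. The nonzero elements of $c(V)$ are exactly the $\cc v$ with $v\neq 0$, by injectivity of $c$. So $\fZ=\{\zz{\cc v}: v\in V\setminus\{0\}\}$, and condition $(\cZ_i)$ for $L=c(V)$, $X=V'$ is literally $(\cW_i)$. Part M then follows from part M of Proposition~\ref{prop:M-FU}, and part FU from its part FU.

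I do not expect any step to be a real obstacle; the only point needing care is the identification of the two topologies, which is immediate from the definitions.
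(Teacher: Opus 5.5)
Your proposal is correct and follows the paper's route exactly: the paper obtains this statement directly from Proposition~\ref{prop:M-FU} applied to $X=V'$ and $L=c(V)$, using the embedding of $V_w$ into $\R^{V'}$ via $c$ noted in Section~\ref{sec:pre}. You simply spell out the details the paper leaves implicit: that $c$ is a linear homeomorphism onto its image, that $\dd_M$ and $\dd_{FU}$ are invariant under such maps, and that $(\cZ_i)$ for $c(V)$ is literally $(\cW_i)$.
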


\begin{proposition}
\label{prop:wlcs:cbi}
Let $V$ and $U$ be LCS and $\varphi\colon V\to U$ a continuous linear bijection.
For $i=1,2$, if $U\in \cW_i$, then $V\in \cW_i$.
\end{proposition}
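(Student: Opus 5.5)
The plan is to transfer the conditions from $U$ to $V$ through the adjoint map $\varphi^*\colon U'\to V'$, $\varphi^*(\te)=\te\circ\varphi$. Since $\varphi$ is continuous and linear, $\varphi^*$ is well defined. The key identity is
\[
\cc{v}\circ\varphi^* = \cc{\varphi(v)} \quad\text{for } v\in V,
\]
which holds because $\varphi^*(\te)(v)=\te(\varphi(v))$. Consequently
\[
(\varphi^*)^{-1}\bigl(\zz{\cc v}\bigr)=\zz{\cc{\varphi(v)}}.
\]
Since $\varphi$ is injective, $v\neq 0$ implies $\varphi(v)\neq 0$. Surjectivity of $\varphi$ is not needed; only injectivity and continuity are used.

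\textbf{Case $i=1$.} Suppose $V\notin\cW_1$. Then there are $v_n\in V\setminus\{0\}$ with $\liminf_n \zz{\cc v_n}=V'$. Taking preimages under $\varphi^*$ commutes with unions and intersections, so
\[
\liminf_n \zz{\cc{\varphi(v_n)}}=(\varphi^*)^{-1}\bigl(\liminf_n\zz{\cc v_n}\bigr)=(\varphi^*)^{-1}(V')=U'.
\]
The vectors $\varphi(v_n)$ are nonzero, so $U\notin\cW_1$. This is the contrapositive of the claim.

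\textbf{Case $i=2$.} Let $\mu$ be a measure on a $\si$-algebra $\Sigma$ of subsets of $U'$ witnessing $U\in\cW_2$. I will push it forward to $V'$. Define
\[
\Sigma'=\{A\subset V' : (\varphi^*)^{-1}(A)\in\Sigma\},\qquad \nu(A)=\mu\bigl((\varphi^*)^{-1}(A)\bigr).
\]
Then $\Sigma'$ is a $\si$-algebra and $\nu$ is a $\si$-additive finite measure. It is nonzero because $\nu(V')=\mu(U')>0$. Each $\cc v$ is $\Sigma'$-measurable: for a Borel set $B$,
\[
(\varphi^*)^{-1}\bigl(\cc v^{-1}(B)\bigr)=\cc{\varphi(v)}^{-1}(B)\in\Sigma,
\]
since $\cc{\varphi(v)}$ is $\mu$-measurable. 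Finally, for $v\neq0$,
\[
\nu\bigl(\zz{\cc v}\bigr)=\mu\bigl(\zz{\cc{\varphi(v)}}\bigr)=0.
\]
Hence $V\in\cW_2$.

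The only point needing care is the meaning of "$\mu$-measurable" in $(\cW_2)$. If it means measurable with respect to the completion of $\mu$, then I would first replace $\Sigma$ by the completed $\si$-algebra and $\mu$ by its completion; this preserves all the required properties. After that, the pushforward argument above goes through unchanged. Beyond this, I do not expect any real obstacle: the whole proof rests on the identity $\cc v\circ\varphi^*=\cc{\varphi(v)}$ and on injectivity of $\varphi$.
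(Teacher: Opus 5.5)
Your proof is correct and is essentially the paper's. Both rest on the adjoint $\varphi^{\#}\colon U'\to V'$ and the identity $(\varphi^{\#})^{-1}(\zz{\cc v})=\zz{\overline{\varphi(v)}}$, and for $\cW_2$ both use the same pushforward $\si$-algebra and measure. The only difference is in case $i=1$: you take preimages under $\varphi^{\#}$, which commute with $\liminf$ for any map, whereas the paper takes images and so needs injectivity of $\varphi^{\#}$, i.e.\ surjectivity of $\varphi$. Your remark that only continuity and injectivity of $\varphi$ are needed is therefore accurate.
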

\begin{proof}
Since $\varphi$ is continuous and linear, the map
\[
\varphi^{\#}\colon U'\to V',\ \te\mapsto  \te \circ \varphi
\]
is well defined. Since $\varphi(V)=U$, $\varphi^{\#}$ is injective. For $v\in V$ and $\te\in U'$ we have
\[
\cc v(\varphi^{\#}\te)=\te(\varphi(v))=\overline{\varphi(v)}(\te),
\]
that is,
\begin{equation}\label{eq:cbi}
(\varphi^{\#})^{-1}\bigl(\zz{\cc v}\bigr)=\zz{\overline{\varphi(v)}}.
\end{equation}
Let $\{v_n\in V\setminus\{0\}:n\in\N\}$. Put $u_n=\varphi(v_n)\neq 0$ for $n\in\N$. Put $A=\varphi^{\#}(U')\subset V'$. Since $\varphi^{\#}$ is injective, \eqref{eq:cbi} implies that
\[
A\cap \zz{\cc v_n}=\varphi^{\#}\bigl(\zz{\cc u_n}\bigr).
\]
The operation $B\mapsto A\cap B$ and the injective map $\varphi^{\#}$ commute with $\liminf$, so
\begin{equation}\label{eq:cbi2}
A\cap \liminf_{n\to\infty} \zz{\cc v_n} = \varphi^{\#}\Bigl(\liminf_{n\to\infty} \zz{\cc u_n}\Bigr).
\end{equation}

Suppose $U\in\cW_1$. Then $\liminf_{n\to\infty} \zz{\cc u_n}\neq U'$. Since $\varphi^{\#}$ is injective, the right-hand side of \eqref{eq:cbi2} is not equal to $\varphi^{\#}(U')=A$. Hence $A\not\subset\liminf_{n\to\infty} \zz{\cc v_n}$, and therefore $\liminf_{n\to\infty} \zz{\cc v_n}\neq V'$. Thus $V\in\cW_1$.

Suppose $U\in\cW_2$, and let $\nu$ be a nonzero $\si$-additive finite measure on a $\si$-algebra $\Sigma$ of subsets of $U'$ from the definition of $\cW_2$. Put
\[
\Sigma_V=\{B\subset V':(\varphi^{\#})^{-1}(B)\in\Sigma\},\qquad \mu(B)=\nu\bigl((\varphi^{\#})^{-1}(B)\bigr),\ B\in\Sigma_V.
\]
Then $\Sigma_V$ is a $\si$-algebra, $\mu$ is a finite $\si$-additive measure on $\Sigma_V$ and $\mu(V')=\nu(U')>0$. Let $v\in V\setminus\{0\}$. By \eqref{eq:cbi},
\[
\mu(\zz{\cc v})=\nu\bigl(\zz{\overline{\varphi(v)}}\bigr)=0,
\]
since $\varphi(v)\neq 0$. For every Borel $O\subset \R$ we have
\[
(\varphi^{\#})^{-1}\bigl(\cc v^{-1}(O)\bigr)=\overline{\varphi(v)}^{-1}(O)\in\Sigma,
\]
since the function $\overline{\varphi(v)}$ is $\nu$-measurable. Hence $\cc v$ is $\mu$-measurable. Thus $V\in\cW_2$.
\end{proof}

\begin{proposition}
\label{prop:wlcs:norm-w1}
If $V$ is a normable space, then $V\in \cW_1$.
\end{proposition}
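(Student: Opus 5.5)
The plan is to apply the Baire category theorem to the dual space $V'$ equipped with its dual norm. Fix a norm $\|\cdot\|$ generating the topology of $V$, and give $V'$ the dual norm $\|\te\|=\sup_{\|v\|\le1}|\te(v)|$. The dual of a normed space is a Banach space whether or not $V$ itself is complete. So $(V',\|\cdot\|)$ is a complete metric space, and no completion of $V$ is needed.

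Now let $\{v_n\in V\setminus\{0\}:n\in\N\}$ be arbitrary; we must show $\liminf_{n\to\infty}\zz{\cc v_n}\neq V'$. For each $n$ the function $\cc v_n\colon V'\to\R$, $\te\mapsto\te(v_n)$, is linear and satisfies $|\te(v_n)|\le\|\te\|\,\|v_n\|$, so it is norm-continuous on $V'$. Since $v_n\neq0$, the Hahn--Banach theorem gives some $\te\in V'$ with $\te(v_n)\neq0$, so $\cc v_n$ is a nonzero functional. Hence $\zz{\cc v_n}$ is a closed hyperplane of the Banach space $V'$. A proper closed linear subspace has empty interior (if it contained a ball it would contain a neighbourhood of $0$, hence everything), so it is closed and nowhere dense.

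Put $I_m=\bigcap_{n\ge m}\zz{\cc v_n}$. Each $I_m$ is closed and $I_m\subset\zz{\cc v_m}$, so each $I_m$ is nowhere dense. Therefore $\liminf_{n\to\infty}\zz{\cc v_n}=\bigcup_m I_m$ is of first category in $V'$, and by the Baire category theorem it cannot equal $V'$. Concretely, there is some $\te\in V'$ with $\te(v_n)\neq0$ for infinitely many $n$. This is exactly $(\cW_1)$.

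The only point needing care is that completeness is required on the dual side, not on $V$. This is harmless because duals of normed spaces are always complete. The remaining steps (continuity of $\cc v_n$ in the dual norm, and nontriviality via Hahn--Banach) are routine.
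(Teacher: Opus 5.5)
Your proposal is correct and is essentially the paper's proof: both apply the Baire category theorem in the Banach dual $V'$, where each $\zz{\cc v_n}$ is a proper closed subspace and hence nowhere dense. The only cosmetic difference is that the paper shows $\bigcup_n \zz{\cc v_n}\neq V'$ and then uses $\liminf_n \zz{\cc v_n}\subset\bigcup_n \zz{\cc v_n}$, whereas you apply Baire directly to the closed nowhere dense sets $I_m$.
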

\begin{proof}
Fix a norm on $V$. Then $V'$ is a Banach space with the dual norm. Let $\{v_n\in V\setminus\{0\}:n\in\N\}$. The set $H_n=\zz{\cc v_n}=\{\te\in V':\te(v_n)=0\}$ is a closed linear subspace of $V'$, since the functional $\te\mapsto\te(v_n)$ is continuous. By the Hahn--Banach theorem there is $\te\in V'$ with $\te(v_n)\neq 0$, hence $H_n\neq V'$. A proper closed subspace of a normed space is nowhere dense (it contains no ball). By the Baire category theorem $\bigcup_{n=1}^\infty H_n\neq V'$. Since $\liminf_{n\to\infty}H_n\subset\bigcup_{n=1}^\infty H_n$, we get $\liminf_{n\to\infty}H_n\neq V'$. Hence $V\in\cW_1$.
\end{proof}

Propositions~\ref{prop:wlcs:cbi} and~\ref{prop:wlcs:norm-w1} imply that if an LCS $V$ admits a continuous norm $p$, then $V\in\cW_1$ (the identity map $V\to(V,p)$ is a continuous linear bijection). By Proposition~\ref{prop:wlcs:M-FU}, $V_w\in \dd_M$.

\begin{lemma}
\label{lem:wlcs:sep-norm-w2}
If $V$ is a separable normable space, then $V\in\cW_2$; that is, 
there is a Borel probability measure $\mu$ on $V'_{w^*}$ such that for all $v \in V\setminus\{0\}$
	$\mu(\{\te \in V' : \te(v) = 0\}) = 0$.
\end{lemma}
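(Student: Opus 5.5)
The plan is to construct the measure explicitly as the image of a product probability measure on a cube under a continuous map into $V'_{w^*}$. Fix a norm on $V$, and let $\{x_n:n\in\N\}$ be a dense subset of the unit sphere of $V$; it exists because $V$ is separable. By the Hahn--Banach theorem, choose $f_n\in V'$ with $\|f_n\|=1$ and $f_n(x_n)=1$. Let $K=[-1,1]^{\N}$ with the product topology, and let $\la$ be the product of the normalized Lebesgue measures on $[-1,1]$. Then $\la$ is a Borel probability measure on the compact metrizable space $K$. Define
\[
\Phi\colon K\to V',\qquad \Phi(\xi)=\sum_{n=1}^\infty 2^{-n}\xi_n f_n .
\]

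The first step is to check that $\Phi$ is continuous from $K$ to $V'$ with the dual norm topology. The series converges absolutely in the Banach space $V'$. Moreover, the tail $\sum_{n>N}2^{-n}\xi_nf_n$ has norm at most $2^{-N}$ uniformly in $\xi$. The partial sums depend continuously on finitely many coordinates. Hence $\Phi$ is a uniform limit of continuous maps and is norm-continuous. Since the weak${}^\ast$ topology is coarser than the norm topology, $\Phi\colon K\to V'_{w^*}$ is also continuous. Put $\mu=\Phi_*\la$, that is, $\mu(B)=\la(\Phi^{-1}(B))$ for Borel sets $B\subset V'_{w^*}$. Preimages of Borel sets under a continuous map are Borel, so $\mu$ is a Borel probability measure on $V'_{w^*}$. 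Every function $\cc v$ is weak${}^\ast$-continuous, hence $\mu$-measurable. In particular, each set $\zz{\cc v}$ is weak${}^\ast$-closed and therefore Borel.

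The main step is to show $\mu(\zz{\cc v})=0$ for $v\neq 0$. We have
\[
\Phi^{-1}(\zz{\cc v})=\Bigl\{\xi\in K:\sum_n a_n\xi_n=0\Bigr\},\qquad a_n=2^{-n}f_n(v),
\]
and $\sum_n|a_n|<\infty$. To find a nonzero coefficient, choose $m$ with $\|x_m-v/\|v\|\|<1/2$. Then
\[
f_m(v/\|v\|)\ge f_m(x_m)-\|x_m-v/\|v\|\|>1/2,
\]
so $a_m\neq 0$. Write $K=[-1,1]\times K'$, where the first factor is the $m$-th coordinate, so that $\la$ is the product of the corresponding measures. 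For each fixed $\xi'\in K'$, the section $\{\xi_m:a_m\xi_m=-\sum_{n\ne m}a_n\xi_n\}$ has at most one point and so has measure zero. By Fubini's theorem, $\la(\Phi^{-1}(\zz{\cc v}))=0$, that is, $\mu(\zz{\cc v})=0$.

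This gives both formulations in the statement: $\mu$ is a Borel probability measure on $V'_{w^*}$ with the required property, and, since all $\cc v$ are Borel measurable, $V\in\cW_2$. The only delicate point is the continuity of $\Phi$ into $V'_{w^*}$ together with the Borel measurability of the image measure. Both follow from the uniform norm convergence of the series, so I expect no real obstacle beyond producing the nonzero coefficient $a_m$ from the density of $\{x_n\}$.
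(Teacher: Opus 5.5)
Your proposal is correct and follows the paper's proof essentially step by step. Both use the same random series $\sum 2^{-n}\xi_n f_n$ of norming functionals, norm-continuity via uniform convergence, the pushforward of the product of normalized Lebesgue measures, and Fubini over a coordinate with nonzero coefficient. The only cosmetic difference is in producing that coefficient: you take $\{x_n\}$ dense in the unit sphere and obtain the explicit bound $f_m(v/\|v\|)>1/2$, whereas the paper takes $\{v_n\}$ dense in $V$ with $f_n(v_n)=\|v_n\|$ and shows by a limiting argument that $\{f_n\}$ separates the points of $V$.
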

\begin{proof}
		Let $\{v_n\}_{n \in \N}$ be a dense subset of $V$. By the Hahn--Banach theorem, for each $n\in \N$ there is a functional $f_n \in V'$ with $\|f_n\| = 1$ and $f_n(v_n) = \|v_n\|$. The sequence $\{f_n\}_{n \in \N}$ separates points: if $v \in V$ and $f_n(v) = 0$ for all $n$, choose a sequence $v_{n_k} \to v$; then
		\[
\|v_{n_k}\| = f_{n_k}(v_{n_k}) = f_{n_k}(v_{n_k} - v) \leq \|v_{n_k} - v\| \to 0.
\]
		Hence $v_{n_k} \to 0$, and therefore $v = 0$.
		For each $n \in \N$ put $\Omega_n = [-1,1]$, $\mathcal{A}_n = \mathcal{B}([-1, 1])$, and let $\lambda_n$ be the normalized Lebesgue measure on $[-1, 1]$. Put
		\[
		(\Omega, \mathcal{A}, \mathbb{P}) = \bigotimes_{n \in \N}(\Omega_n, \mathcal{A}_n, \lambda_n),
		\]
		the countable product of probability spaces, endowed with the product topology. Let $\xi_n : \Omega \rightarrow [-1,1]$ be the coordinate projections. They are continuous, hence they are random variables. Define a random linear Borel functional $F: \Omega \rightarrow V'_{w^*}$ by
		$$F(\omega) = \sum_{n=1}^{\infty}\dfrac{ \xi_n(\omega)f_n}{2^n}.$$
		Let us show that this definition is correct: for each $\omega \in \Omega$ the series converges in $V'$, and the map $F$ is continuous in the weak$^*$ topology (hence preimages of open sets are Borel). Indeed:
		\begin{enumerate}
		\item For each $\omega \in \Omega$ the series converges absolutely in the norm of $V'$, since
		$$\sum_{n=1}^{\infty}\left\|\dfrac{\xi_n(\omega)f_n}{2^n}\right\| = \sum_{n=1}^{\infty}\dfrac{|\xi_n(\omega)|\,\|f_n\|}{2^n} \leq \sum_{n=1}^{\infty}\dfrac{1}{2^n} = 1 < \infty.$$
		The space $V'$ is complete, so the series converges in the norm of $V'$, and $F(\omega)\in V'$ is a continuous linear functional with $\|F(\omega)\| \leq 1$; that is, $F(\Omega)$ lies in the unit ball of $V'$.
		\item The partial sums $\sum\limits_{n=1}^{N}\dfrac{ \xi_n(\omega)f_n}{2^n}$ are continuous as maps from $\Omega$ to $V'$, and the series converges uniformly on $\Omega$, since
		$$\left\|\sum\limits_{n=N}^{\infty}\dfrac{ \xi_n(\omega)f_n}{2^n}\right\| \leq \sum\limits_{n=N}^{\infty}\dfrac{1}{2^n} = 2^{1-N}\to 0\quad (N\to\infty).$$
		Hence $F\colon \Omega \rightarrow V'$ is continuous as a uniform limit of continuous maps. Then $F$ is also continuous in the weaker topology, as a map $F: \Omega \rightarrow V'_{w^*}$.
		\end{enumerate}
		Let $\mu$ be the distribution of the random functional, which is a Borel probability measure on $V'_{w^*}$. By the estimates above, this measure is concentrated on the unit ball. We show that $\mu$ is the required measure. (Each functional $\cc v$, $v\in V$, is weak$^*$ continuous, hence Borel, that is, $\mu$-measurable.)
		Let $v \in V\setminus\{0\}$; denote $A_v = \{\te \in V' : \te(v) = 0\}$. This is a closed set, hence measurable. We get:
		$$\mu(A_v) = \mathbb{P}(F^{-1}(A_v)) = \mathbb{P}\Bigl(\{\omega \in \Omega : (F(\omega))(v) = 0\}\Bigr).$$
		Denote $E = \{\omega \in \Omega : (F(\omega))(v) = 0\}$; it suffices to show that $\mathbb{P}(E) = 0$.\\
		Since the sequence $\{f_n\}_{n \in \N}$ separates points of $V$, there is $k$ with $f_k(v) \neq 0$. Represent $\Omega$ as a product over the $k$-th coordinate and the remaining ones:
		$$\Omega = \Omega_k \times \Omega', \quad \Omega' = \prod_{n \neq k}\Omega_n, \quad \mathbb{P} = \lambda_k \otimes \mathbb{P}', \quad \mathbb{P}' = \bigotimes_{n \neq k}\lambda_n.$$
	We write a point $\omega \in \Omega$ as a pair $\omega = (t, \omega')$, where $t = \xi_k(\omega) \in [-1,1]$, $\omega' \in \Omega'$. Separating the $k$-th term, we get
		$$(F(\omega))(v) = \dfrac{tf_k(v)}{2^k}+ \sum_{n \neq k} \dfrac{\xi_n(\omega')\, f_n(v)}{2^n}.$$
		The indicator function $(t, \omega') \mapsto \chi_E(t, \omega')$ belongs to $L^1$, hence Fubini's theorem applies:
		\begin{align*}
		\mathbb{P}(E) &= \int_{\Omega_k \times \Omega'} \chi_E(t, \omega')\,d(\lambda_k \otimes \mathbb{P}')\\
		&= \int_{\Omega'}\Bigl(\int_{\Omega_k} \chi_E(t, \omega')\, \lambda_k(dt)\Bigr) \mathbb{P}'(d\omega') = 0.
		\end{align*}
		The last equality follows from the fact that for each fixed $\omega'$ the function $t\mapsto (F(t, \omega'))(v)$ vanishes at a single point
		\[
		t^*=-\dfrac{2^k}{f_k(v)}\sum_{n \neq k} \dfrac{\xi_n(\omega')\, f_n(v)}{2^n},
		\]
		and hence $\int\limits_{\Omega_k} \chi_E(t, \omega')\, \lambda_k(dt)  = 0$.
\end{proof}

\begin{theorem}
\label{th:wlcs:norm-dd-FU}
If an LCS $V$ admits a continuous norm, then $V_w\in \dd_{FU}$.
\end{theorem}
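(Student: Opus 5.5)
The plan is to reduce to a countable-dimensional subspace, where separability makes Lemma~\ref{lem:wlcs:sep-norm-w2} available, and then move back to $V$ via Proposition~\ref{prop:wlcs:cbi}. Suppose, to the contrary, that $V_w$ contains an infinite-dimensional Fr\'echet--Urysohn subspace $S$. As in the proof of Proposition~\ref{prop:M}, pick a countable-dimensional infinite-dimensional subspace $S_0\subset S$. Being Fr\'echet--Urysohn is hereditary, so $S_0$ with the topology induced from $V_w$ is still Fr\'echet--Urysohn.

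Let $p$ be a continuous norm on $V$. Let $W$ be the linear space $S_0$ with the locally convex topology induced from $V$; it is Hausdorff. The normed space $(S_0,p)$ is separable, because the rational linear combinations of a countable Hamel basis are $p$-dense. Lemma~\ref{lem:wlcs:sep-norm-w2} then gives $(S_0,p)\in\cW_2$. The identity map $W\to(S_0,p)$ is a continuous linear bijection, since $p$ is continuous on $V$. Proposition~\ref{prop:wlcs:cbi} therefore yields $W\in\cW_2$, and Proposition~\ref{prop:wlcs:M-FU} (part FU) gives $W_w\in\dd_{FU}$.

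It remains to identify $W_w$ with $S_0$ carrying the topology induced from $V_w$. This is the standard Hahn--Banach fact that the weak topology of a subspace coincides with the trace of the weak topology. Every $\te\in V'$ restricts to an element of $W'$. Conversely, every $\te\in W'$ is dominated by a continuous seminorm of $V$ on $W$, so it extends to an element of $V'$. Hence $\si(S_0,W')$ is exactly the topology induced by $\si(V,V')$. Consequently $W_w$ itself is an infinite-dimensional Fr\'echet--Urysohn space, contradicting $W_w\in\dd_{FU}$.

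The only delicate point is that $V$ need not be normable or separable, so Lemma~\ref{lem:wlcs:sep-norm-w2} cannot be applied to $V$ directly. The passage to the countable-dimensional $S_0$, together with the identification of weak topologies on subspaces, is what overcomes this; once that identification is checked, the remaining steps are direct citations.
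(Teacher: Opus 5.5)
Your proposal is correct and follows essentially the same route as the paper's proof. Both pass to a countable-dimensional infinite-dimensional subspace, apply Lemma~\ref{lem:wlcs:sep-norm-w2} to the separable normed space $(S_0,p)$, transfer $\cW_2$ back through the identity map via Proposition~\ref{prop:wlcs:cbi}, and identify the trace of $\si(V,V')$ with the subspace's own weak topology by Hahn--Banach; the only differences are that you do this identification at the end instead of the beginning, and that you state explicitly that $W$ carries the topology induced from $V$.
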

\begin{proof}
Suppose the contrary. Then $V_w$ contains an infinite-dimensional Fr\'echet--Urysohn subspace $S$. Choose a countable-dimensional infinite-dimensional subspace $W\subset S$. As a subspace of a Fr\'echet--Urysohn space, $W$ (with the topology induced from $V_w$) is a Fr\'echet--Urysohn space. Since $V$ is locally convex, by the Hahn--Banach theorem every continuous functional on $W$ extends to $V$, so the topology induced on $W$ from $V_w$ coincides with the weak topology $W_w$ of the space $W$. Thus $W_w$ is an infinite-dimensional Fr\'echet--Urysohn space.

Let $p$ be a continuous norm on $V$. Denote by $U$ the space $W$ with the norm $p|_W$. Then the identity map $\varphi\colon W\to U$ is a continuous linear bijection onto the normed space $U$. The space $U$ is countable-dimensional, hence separable (linear combinations of elements of a Hamel basis with rational coefficients are dense in $U$). By Lemma~\ref{lem:wlcs:sep-norm-w2} there is a Borel probability measure $\mu$ on $U'_{w^*}$ such that $\mu(\zz{\cc u})=0$ for all $u\in U\setminus\{0\}$. Each function $\cc u$ is weak$^*$ continuous, hence Borel, that is, $\mu$-measurable. Hence $U\in\cW_2$. Proposition~\ref{prop:wlcs:cbi} implies that $W\in\cW_2$, and Proposition~\ref{prop:wlcs:M-FU} implies that $W_w\in\dd_{FU}$. But $W_w$ is an infinite-dimensional Fr\'echet--Urysohn space. A contradiction.
\end{proof}

\begin{cor}
\label{cor:wlcs:norm-dd-FU}
If an LCS $V$ admits a continuous norm and $V_w$ is Fr\'echet--Urysohn, then $V$ is finite-dimensional.
\end{cor}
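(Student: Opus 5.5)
The corollary follows directly from Theorem~\ref{th:wlcs:norm-dd-FU}. Assume $V$ admits a continuous norm and $V_w$ is Fr\'echet--Urysohn. By Theorem~\ref{th:wlcs:norm-dd-FU}, $V_w\in\dd_{FU}$. By definition, every Fr\'echet--Urysohn linear subspace of $V_w$ is then finite-dimensional. Now take the subspace to be $V_w$ itself: it is a linear subspace of $V_w$ and is Fr\'echet--Urysohn by hypothesis. Hence $V_w$ is finite-dimensional as a vector space.

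It remains to pass from $V_w$ back to $V$. The weak topology changes only the topology, not the underlying linear space, so $V$ and $V_w$ have the same algebraic dimension. Therefore $V$ is finite-dimensional.

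There is no real obstacle here. The only point to check is that $\dd_{FU}$ is defined through arbitrary linear subspaces, which include the whole space. This holds by the definition of subspace in Section~\ref{sec:pre}, so the conclusion applies to $V_w$ itself.
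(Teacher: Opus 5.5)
Your proposal is correct and matches the paper, which states this corollary as an immediate consequence of Theorem~\ref{th:wlcs:norm-dd-FU}. You apply $V_w\in\dd_{FU}$ to the Fr\'echet--Urysohn subspace $V_w$ itself, and note that $V$ and $V_w$ have the same underlying vector space; that is exactly the intended derivation.
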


\begin{example}\label{ex:Vtau}
Let $(E,\|\cdot\|)$ be an infinite-dimensional normed space (for example, $\ell_2$), $E^{\#}$ the space of all (not only continuous) linear functionals on $E$, and $V$ the space $E$ with the topology generated by the norm and all functionals from $E^{\#}$. Then $V$ is an LCS with the continuous norm $\|\cdot\|$, all linear functionals on $V$ are continuous, that is, $V'=E^{\#}$, and $V_w=(E,\sigma(E,E^{\#}))$. By Theorem~\ref{th:wlcs:norm-dd-FU}, $V_w\in\dd_{FU}$. This inclusion can also be obtained differently. Let $X$ be a discrete space of cardinality $\dim E$. Choosing a Hamel basis of $E$, we get a linear isomorphism $E\cong\R^{(X)}$ (the space $L(X)$ as a vector space) and $E^{\#}\cong\R^X=C(X)$; hence $V_w=(E,\sigma(E,E^{\#}))$ is isomorphic to $L_p(X)$, and $V_w\in\dd_{FU}$ by Remark~\ref{note:Lp}. The space $V$ is not normable: if the topology of $V$ were given by a norm $q$, then all linear functionals on $(E,q)$ would be continuous, whereas on an infinite-dimensional normed space there is a discontinuous linear functional. Moreover, $V_w$ is not isomorphic to $Y_w$ for any normable $Y$: an isomorphism $T\colon V_w\to Y_w$ would give $Y'=Y^{\#}$, that is, continuity of all linear functionals on $Y$, whence $Y$ is finite-dimensional. Consequently, Theorem~\ref{th:wlcs:norm-dd-FU} does not reduce to the case of normable spaces.
\end{example}

\begin{note}\label{note:Lp}
The existence of a continuous norm in Theorem~\ref{th:wlcs:norm-dd-FU} is sufficient but not necessary for $V_w\in\dd_{FU}$. Let $V=L(X)$ be the free LCS over a Tychonoff space $X$. Then $V_w=L_p(X)$ (the space $L(X)$ with its weak topology; the dual of $L(X)$ is $C(X)$, so $L_p(X)\subset\R^{C(X)}$), and by \cite[Theorem~7.10(i)]{LRS-TVS} every $\varkappa$-Fr\'echet--Urysohn (in particular, Fr\'echet--Urysohn) linear subspace of $L_p(X)$ is finite-dimensional, that is, $V_w\in\dd_{FU}$. At the same time, if $X$ is not submetrizable (for example, $X=\beta\N$), then $L(X)$ has no continuous norm: a continuous norm on $L(X)$ would define a continuous metric $d(x,y)=\|x-y\|$ on $X$, that is, $X$ would be submetrizable.
\end{note}

\section{The dual space with the weak\texorpdfstring{$^*$}{*} topology}\label{sec:wstar}

Let $V$ be an LCS and let $V'_{w^*}$ be the dual space $V'$ with the weak$^*$ topology $\sigma(V',V)$. The space $V'_{w^*}$ is naturally embedded into $\R^V$ (a functional $f\in V'$ is regarded as a function $f\colon V\to\R$), and its topology is the topology of pointwise convergence on $V$.

\begin{theorem}\label{th:wstar-complete}
Let an LCS $V$ admit a complete norm $\|\cdot\|$ whose topology is finer than the given topology of $V$ (that is, the identity map $(V,\|\cdot\|)\to V$ is continuous). In particular, this is the case if $V$ is Banach. Then $V'_{w^*}\in\dd_{FU}$.
\end{theorem}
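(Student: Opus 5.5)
The plan is to argue by contradiction, as in the proof of Theorem~\ref{th:wlcs:norm-dd-FU}, and to apply Theorem~\ref{th:FU} with the roles of $V$ and $V'$ exchanged. Here $X=V$ and $L=V'_{w^*}\subset\R^V$.

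Suppose $V'_{w^*}$ contains an infinite-dimensional Fr\'echet--Urysohn subspace $S$. Choose a countable-dimensional infinite-dimensional subspace $W\subset S$ with Hamel basis $\{f_n:n\in\N\}$. As a subspace of $S$, $W$ is Fr\'echet--Urysohn in the topology of pointwise convergence on $V$. It therefore suffices to construct a nonzero finite measure $\mu$ on the Borel sets of $V$ with two properties: every $f\in W$ is $\mu$-measurable, and $\mu(\zz f)=0$ for every nonzero $f\in W$. Theorem~\ref{th:FU}, applied to $W\subset\R^V$, then gives $W\in\dd_{FU}$, which contradicts the choice of $W$.

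First I select a countable family of test vectors. For each $m$ the functionals $f_1,\dots,f_m$ are linearly independent on $V$, so there is a finite set $D_m\subset V$ such that $f_1|_{D_m},\dots,f_m|_{D_m}$ are linearly independent. Otherwise the map $\pi_F$ from the span to $\R^F$ would have a nontrivial kernel for every finite $F$, and these kernels form a decreasing directed family of subspaces of a finite-dimensional space, so they would have a nonzero common element, which is impossible. Put $D=\bigcup_m D_m=\{x_n:n\in\N\}$. Every nonzero $f\in W$ lies in $\env{f_1,\dots,f_m}$ for some $m$, hence does not vanish on $D_m$. After rescaling, which does not affect this property, we may assume $\|x_n\|\le1$.

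Next I copy the random-series construction of Lemma~\ref{lem:wlcs:sep-norm-w2}, now in $V$ itself. Let $(\Omega,\mathbb P)$ be the countable product of normalized Lebesgue measures on $[-1,1]$, with coordinates $\xi_n$. Define
\[
F(\om)=\sum_{n=1}^\infty \frac{\xi_n(\om)\,x_n}{2^n}.
\]
This series converges absolutely, and uniformly in $\om$, in the complete norm $\|\cdot\|$. This is the one place where completeness is used, and it is the step I expect to be essential (compare Remark~\ref{note:wstar-countable}). Hence $F\colon\Omega\to(V,\|\cdot\|)$ is continuous. Since the identity $(V,\|\cdot\|)\to V$ is continuous, $F$ is also continuous into $V$. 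Let $\mu$ be the distribution of $F$, a Borel probability measure on $V$. Every $f\in V'$ is continuous on $V$, hence Borel and $\mu$-measurable.

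Finally, fix a nonzero $f\in W$ and pick $k$ with $f(x_k)\neq0$. Then
\[
f(F(\om))=\frac{\xi_k f(x_k)}{2^k}+\sum_{n\ne k}\frac{\xi_n f(x_n)}{2^n},
\]
where interchanging $f$ with the sum is justified by continuity of $f$ on $V$ and convergence of the series in $V$. Split $\Omega=\Omega_k\times\Omega'$ and apply Fubini exactly as in Lemma~\ref{lem:wlcs:sep-norm-w2}: for each fixed $\om'$ the expression vanishes at only one value of $t=\xi_k$. Hence $\mu(\zz f)=\mathbb P(f\circ F=0)=0$. This verifies the hypotheses of Theorem~\ref{th:FU} for $W$ and produces the contradiction.
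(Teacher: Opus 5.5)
Your proposal is correct and follows essentially the paper's own proof: you take a countable-dimensional Fréchet--Urysohn subspace, pick a countable set of test vectors on which no nonzero element vanishes identically, form a random series that converges in the complete norm, and use Fubini to verify $(\cZ_2)$ and apply Theorem~\ref{th:FU}. The differences are only cosmetic and make your version slightly leaner. You build the series directly from the test set $D$ and use the Borel $\sigma$-algebra of $V$ in its given topology. The paper instead passes to the closed span $Z$ with a dense sequence, and uses a $\sigma$-algebra defined through the Borel sets of $(Z,\|\cdot\|)$.
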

\begin{proof}
Suppose the contrary. Then $V'_{w^*}$ contains an infinite-dimensional Fr\'echet--Urysohn subspace, and it contains a countable-dimensional infinite-dimensional subspace $L_0$, which is also Fr\'echet--Urysohn. Write $L_0=\bigcup_k L_k$, where $L_1\subset L_2\subset\dots$ are finite-dimensional subspaces. For each $k$ there is a finite set $F_k\subset V$ such that the map $L_k\to\R^{F_k}$, $f\mapsto f|_{F_k}$, is injective (choose $\dim L_k$ points at which the determinant built from a basis of $L_k$ is nonzero). Let $Z$ be the closure of the linear span of $\bigcup_k F_k$ in the Banach space $(V,\|\cdot\|)$. The space $Z$ is separable, and $f|_Z\neq0$ for every $f\in L_0\setminus\{0\}$, since $f\in L_k$ for some $k$, and hence $f|_{F_k}\neq0$.

Let $\{z_n:n\in\N\}$ be dense in $Z\setminus\{0\}$ (with respect to $\|\cdot\|$), let $\xi_n$ be independent random variables uniformly distributed on $[-1,1]$, and let
\[
\Phi=\sum_{n=1}^\infty \frac{\xi_n z_n}{2^n \|z_n\|}\in Z .
\]
The series converges uniformly, so $\Phi$ is a random element of $(Z,\|\cdot\|)$. Put $\Sigma=\{A\subset V:\ A\cap Z\ \text{is Borel in}\ (Z,\|\cdot\|)\}$ and $\mu(A)=\mathbf P(\Phi\in A)$ for $A\in\Sigma$. Then $\mu$ is a probability measure on the $\si$-algebra $\Sigma$. Every function $f\in L_0$ is continuous on $V$, hence on $(V,\|\cdot\|)$; therefore $f^{-1}(O)\cap Z$ is Borel in $Z$ for every Borel $O\subset\R$, that is, $f$ is $\mu$-measurable.

Let $f\in L_0\setminus\{0\}$. Since $f|_Z\neq0$ is continuous on $Z$, there is $k$ with $f(z_k)\neq0$. Fix the values of all $\xi_n$, $n\neq k$. Then $f(\Phi)=\alpha+\xi_k\,f(z_k)/(2^k\|z_k\|)$ is an affine function of $\xi_k$ with a nonzero coefficient, and it vanishes for at most one value of $\xi_k$. By Fubini's theorem $\mu(\zz f)=\mathbf P(f(\Phi)=0)=0$. Thus condition $(\cZ_2)$ holds for $L_0\subset\R^V$, and by Proposition~\ref{prop:M-FU} $L_0\in\dd_{FU}$. But $L_0$ is infinite-dimensional and Fr\'echet--Urysohn. A contradiction.
\end{proof}

For Banach $V$, Theorem~\ref{th:wstar-complete} is proved in \cite[Theorem~5.8]{LRS-TVS} (where it is also shown that $V'_{w^*}$ has no infinite-dimensional $\varkappa$-Fr\'echet--Urysohn or Baire subspaces); the proof above uses measure theory and is a variant of the method of Theorem~\ref{th:wlcs:norm-dd-FU}.

\begin{note}\label{note:wstar-countable}
The completeness of the norm in Theorem~\ref{th:wstar-complete} cannot be dropped. Let $V$ be a normed space of countably infinite dimension and let $\{e_k:k\in\N\}$ be its Hamel basis. The topology of $V'_{w^*}$ is generated by the countable family of functionals $f\mapsto f(e_k)$, hence $V'_{w^*}$ is metrizable; it is infinite-dimensional, since $V'$ separates the points of $V$. Hence $V'_{w^*}\notin\dd_M\supset\dd_{FU}$ (cf.\ \cite[Example~5.9]{LRS-TVS}).
\end{note}

\subsection*{Declaration of competing interest}
The authors declare that they have no known competing financial interests or personal relationships that could have appeared to influence the work reported in this paper.

\subsection*{Data availability}
No data were used for the research described in the article.


\end{document}